\documentclass[a4paper,12pt]{article}

\usepackage{ulem}

\usepackage{graphicx,color}

\usepackage[latin1]{inputenc}
\usepackage{amsfonts}
\usepackage{amsmath,amsthm}
\usepackage{amssymb}
\usepackage{verbatim}
\usepackage{latexsym}

\newcommand{\Rd}{\mathbb R^d}
\newcommand{\R}{\mathbb R}
\newcommand{\Z}{\mathbb Z}
\newcommand{\N}{\mathbb N}
\newcommand{\E}{\mathbb E}
\newcommand{\ind}{{\bf 1}}
\newcommand{\rg}{\textrm{rank\;}}
\newcommand{\A}{\textrm{{\bf A}}}
\newcommand{\pa}{\textrm{{\bf PA}}}
\newcommand{\na}{\textrm{{\bf NA}}}

\newcommand{\cov}{\mbox{\rm Cov}}
\newcommand{\var}{\mbox{\rm Var}}
\newtheorem{theorem}{Theorem}[section]
\newtheorem{lemma}[theorem]{Lemma}
\newtheorem{lemma*}{Lemma}

\newtheorem{corollary}[theorem]{Corollary}
\newtheorem{definition}[theorem]{Definition}
\newtheorem{definition*}[lemma*]{Definition}
\newtheorem{example}[theorem]{Example}

\newtheorem{remark}[theorem]{Remark}
\newcommand{\eqd}{\stackrel{d}{=}}
\newcommand{\tod}{\stackrel{d}{\longrightarrow}}
	\newcommand{\ud}{\mathrm{d}}
	\newcommand{\Int}{\int\limits}


\title{Long range dependence of heavy tailed random functions}
\author{Rafal Kulik, Evgeny Spodarev}

\begin{document}

\maketitle

\begin{abstract}
We introduce a  definition of long range dependence of random processes and fields on an (unbounded) index space $T\subseteq \R^d$ in terms of integrability of the covariance of indicators that a random function exceeds any given level. This definition is particularly designed to cover the case of random functions with infinite variance. We show the value of this new definition and its connection to limit theorems on some examples including subordinated Gaussian as well as random volatility fields and time series.

AMS Subj. Class.: Primary 60G10; Secondary 60G60, 60G15, 60F05.
\end{abstract}
\section{Introduction}

Let $X=\{ X_t, t\in T \}$ be a stationary random field on an unbounded index subset $T$ of $\Rd$, $d\ge 1$, defined on an abstract probability space $(\Omega, {\cal F}, P)$.
If $X_0$ is square integrable
then the classical definition of long range dependence is
\begin{equation}\label{lrdfv}
\int_T |C_X(t)|\, dt=+\infty,
\end{equation}
 where $C_X(t)=\cov(X_0, X_t)$, $t\in T$. There are also other definitions e.g. in terms of spectral density of $X$ being unbounded at zero, growth comparison of partial sums (Allan sample variance), the order of the variance of sums  going to infinity, etc., see the modern reviews in \cite{GiraitisKoulSurg12}, \cite{beran:kulik:2013}, \cite{Samorod16} for processes and  \cite{Lavancier06} for random fields.  All these approaches are not equivalent to each other.

 More importantly,  there is no unified approach to define long memory property if $X$ is heavy tailed, that is with infinite variance. Many authors use the phenomenon of phase transition in certain parameters of the field (such as stability index, Hurst index, heaviness of the tails, etc.) regarding their different limiting behaviour. To give just a few examples, we mention
 \cite{sly:heyde:2008} for the subordinated heavy-tailed Gaussian time series whereas  \cite{Samorod04}, \cite{RoySam08}, \cite{Roy10}, \cite{Owada2015MaximaOL}, \cite{samorodnitsky2019} consider the extreme value behaviour of partial maxima of stable random processes and fields and a connection with their ergodic properties. In \cite[p. 76]{DehlPhil02}, the short or long memory for stationary time series is defined by using different limits in functional limit theorems.
 Papers \cite{DamPaul17,Paul16} analyze different measures of dependence (such as $\alpha$-spectral covariance) for linear random fields with infinite variance lying in the domain of attraction of a stable law. Those are used to define various types of memory and prove corresponding limit theorems for partial sums.


The main goal of our paper is to give a simple uniform view into long range dependence which applies to any stationary (light or heavy tailed) random field $X$; see Definition \ref{def:lrd}.
In Section \ref{subsec:CheckLRD} we show that all rapidly mixing random fields  are short range dependent in the sense of the new definition. No moment assumptions are needed there.
In Section \ref{subsec:SubGaussian}, the sufficient conditions for a subordinated Gaussian (possibly heavy-tailed) random field to be short or long range dependent are given.
We show that the transition from short to long memory occurs at the same boundary for both finite and infinite variance random fields; see Theorem \ref{thm:subGaussSRD} and Example \ref{ex:e_x2}. This cannot be achieved using the classical definitions based on second-order properties.  
In the next section, the same is done for stochastic volatility random fields of the form
$X_t=G(Y_t)Z_t$. Different sources of long range dependence are described. Conditions for long or short memory of $\alpha$--stable moving averages and certain max--stable processes are discussed in the forthcoming paper \cite{Makoginetal19}.

As indicated above, one can approach long memory from two different perspectives: through the distributional properties of the process or limiting behaviour of suitable statistics. Our definition falls into the first category. Thus, as the next step, we attempt to link the definition with limit theorems. In this context,  
the appropriate statistic to study appears to be the volume of level sets of the field.
This is done in Section \ref{sec:lt}. First, we consider subordinated Gaussian random fields and show the agreement between our definition and the limiting behaviour. See Section \ref{subsect:LTInfVar}. In the following section we indicate that our definition is not suitable to capture limiting behaviour of the empirical mean.  
In Section \ref{subsec:LTInt} we consider the corresponding problems for random volatility models. In order to do so, we have to develop limiting theory for integral functionals of random volatility models, including the case of limit theorems for the volume of level sets of $X$. These results are of independent interest.

For better readability, proofs of the most of results are moved to Appendix.

\section{Preliminaries}\label{sec:Prelim}

Recall that $T$ is an unbounded subset of $\mathbb{R}^d$.
Let $\N_0=\N\cup \{ 0 \}$, and let $\nu_d(\cdot)$ be the $d$--dimensional Lebesgue measure. We denote by $\R_\pm$ either $\R_+=[0,+\infty)$ or $\R_-=(-\infty,0]$, depending on the context. For instance, $G:\R\to \R_\pm$ means that $G$ maps $\R$ either to $\R_+$ or to $\R_-$.  Let $\|\cdot\|$ be a norm in the Euclidean space $\Rd$.
For two functions $f,g: \R\to \R $  we write   $f(x) \sim  g(x)$, $x\to a$ if $\lim_{x\to a} f(x)/g(x)=1$, where $g(x)\neq 0$ in a neighbourhood of $a$.
Let $\langle f, g\rangle = \int_{\R} f(x)g(x) \, dx$ be the inner product in the space $L^2(\R)$ of square integrable functions. Additionally, we shall make use of the inner product
$\langle f, g\rangle_{\varphi} = \int_{\R} f(x)g(x)\varphi(x) \, d x$ in the space $L^2_{\varphi}(\R)$ of functions which are square integrable with the weight $\varphi$, where $\varphi$ is the standard normal density. For a finite measure $\mu$ on $\R$, let $\textrm{supp} (\mu)$ be its support, i.e., the compliment of the largest measurable subset of $\mu$-measure zero in $\R$.

Let $(\Omega, \mathcal{F}, P)$ be a probability space. We say that $\{ X_t, \; t\in T \}$ is a {\it white noise} if it consists of i.i.d. random variables $X_t$.

{For any random variable $X$ let} $F_X(x)=P(X\le x)$ and $\bar{F}_X(x)=1-F_X(x)$ be the cumulative distribution function and the tail distribution function of $X$, respectively.  Let $F_{X,Y}(x,y)=P(X\le x, Y\le y)$, $x,y\in \R$ be the bivariate distribution function of a random vector $(X,Y)$. Later on we make use of the known formula
\begin{equation} \label{eq:condcov}
\cov(X,Y)=\E\left( \cov(X,Y|{\cal A}) \right)+ \cov \left( \E(X|{\cal A}), \E(Y|{\cal A}) \right)
\end{equation}
for any $\sigma$--algebra ${\cal A}\subset {\cal F}$.


 A random field $X=\left\{X_t,t\in T\right\}$  is called \emph{associated} (\A)
 if
 $$\cov\left(f\left(X_I\right),g\left(X_I\right)\right)) \ge 0 $$ for any finite subset $I\subset T$ and for any
bounded coordinatewise non--decreasing Borel functions $f,g:\R^{|I|}\rightarrow\R$, where $X_I=\{
X_t,t\in I \}$. $X$ is called \emph{ positively associated (\pa)} or \emph{negatively associated (\na)} if
$$\cov\left(f\left(X_I\right),g\left(X_J\right)\right)) \ge 0 \ (\le 0),$$ respectively for all finite disjoint subsets $I,J\subset T$, and for any
bounded coordinatewise non--decreasing Borel functions $f:\R^{|I|}\rightarrow\R$, $g:\R^{|J|}\rightarrow\R$, see e.g. \cite{BulSha07}.

We use the notation $B\sim S_{\alpha} \left(\sigma,1,0  \right)$ for an  $\alpha$-stable subordinator $B$  with scale parameter
$\sigma>0$, cf. \cite{SamorodTaqqu94}.
\section{Long range dependence}
\label{sec:lrd}

Consider a real--valued stationary random field $X=\{X_t,t\in T\}$. Introduce
$$
\cov_X(t,u,v)=\cov \left(\ind(X_0>u),\ind(X_t>v) \right), \quad t\in T, \, x,v\in \R.
$$
It is always defined as the indicators involved are bounded functions.
\begin{definition}\label{def:lrd}
A random field $X=\{X_t,t\in T\}$ is called short range dependent (s.r.d.) if for any finite measure $\mu$ on $\R$
$$
\sigma^2_{\mu,X}:=\int\limits_T\int\limits_{\R^2}|{\cov}_X(t,u,v)|\, \mu(du)\, \mu(dv)\,dt<+\infty.
$$
$X$ is
long range dependent (l.r.d.)  if there exists a finite measure $\mu$ on $\R$ such that
$\sigma^2_{\mu,X}=+\infty.$
For discrete parameter random fields (say, if $T\subseteq \Z^d$), the $\int_T \,dt$ above  should be replaced by   $\sum_{t\in T: t\neq 0}$.
\end{definition}

\subsection{Motivation} \label{subsec:Motivation}
Assume that $X$ is
stationary with marginal distribution function  $F_X(x)=P(X_0\le x)$, $x\in\R$,   covariance function $C(t)=\cov(X_0, X_t)$, $t\in T$, and moreover,
\begin{equation} \label{eq:property}
\cov_X(t,u,v)\ge 0 \mbox{ or } \le 0  \mbox{ for all } t\in T, \, u,v\in\R.
\end{equation}
Examples of $X$ with this property are all \pa \  or \na -  random functions. Applying  \cite[Lemma 2]{Lehmann66}, we have (the equality is originally attributed to Hoeffding (1940))
\begin{equation} \label{eq:Hoeffding}
C_X(t)=\int\limits_{\R^2} \cov_X(t,u,v) \, du\,dv.
\end{equation}
Then, $X$ is long range dependent if
\begin{equation}\label{eq:cov-finite-variance}
\int\limits_T |C_X(t)|\, dt=\int\limits_T\int\limits_{\R^2}|\cov_X(t,u,v)|\, du\,dv\,dt=+\infty,
\end{equation}
which agrees with the classical definition.

However, in Definition \ref{def:lrd} we integrate $|\cov_X(t,u,v)|$ with respect to a finite measure $\mu\times\mu$  instead of Lebesgue measure $du \, dv$.
First, in case of the infinite variance the right-hand side in (\ref{eq:cov-finite-variance}) is often infinite, regardless of a dependence structure. As such, the classical definition of long memory is irrelevant in the infinite variance case. Second, our definition
will have a natural link with
the asymptotic behavior of volumes of excursions of $X$ above levels $u,$ $v$.
Recall the functional central limit theorem (CLT) for normed volumes of excursion sets of $X$ at level $u$ proven in  \cite{MeshShash11} (see also \cite[Theorem 9, p. 234]{Spodarev14} for a generalization of this result to fields without a finite second moment).
Namely,    for a large class of weakly dependent stationary random fields $X$ on $\Rd$, the function
$$\int\limits_{\Rd}\cov_X(t,u,v)\,dt\;, \ \ u,v\in\R$$
is the covariance function of the centered Gaussian process which appears as a limit of
\begin{equation}\label{eq:normedLevelSet}
\frac{\nu_d\left(\{  t\in [0,n]^d: X_t>u \}\right) -n^d \bar{F}_{X}(u)  }{n^{d/2}}, \quad u\in\R,\quad n\to\infty
\end{equation}
in $\cal{D}(\R)$ equipped with the $J_1$ Skorokhod topology.
If in particular the random field is \pa \  or \na, then by the  continuous mapping theorem, it holds
\begin{equation}\label{eq:normedLevelSetmu}
\frac{\int_{\R} \nu_d\left(\{  t\in [0,n]^d: X_t>u \}\right) \mu(du) -n^d \int_{\R} \bar{F}_{X}(u)\mu(du)  }{n^{d/2}}\tod N(0,\sigma^2_{\mu,X})
\end{equation}
as $n\to\infty$ for any finite measure $\mu$ with $\sigma^2_{\mu,X} $ as in Definition \ref{def:lrd}.
So $X$ is
s.r.d. if the asymptotic covariance $\sigma^2_{\mu,X} $ in the central limit theorem \eqref{eq:normedLevelSetmu} is finite for any finite integration measure $\mu$ prescribing the choice of levels $u$. 
On the contrary, 
\begin{equation}\label{eq:sigma^2}
\sigma^2_{\mu,X}=+\infty
\end{equation}
for $\mu=\delta_{\{ u_0 \}}$ means
that a different normalization is needed in (\ref{eq:normedLevelSet}) and a non-Gaussian limit may arise.
%

Let us point out at a possible interpretation of Definition \ref{def:lrd}  in financial context.   Assume $X=\{ X_t, t\in \Z\}$ to be a time series representing the stock price for which an American option at price $u_0>0$, $t\in [0,n]$, $n\in\N$ is issued.   The customer may buy a call at price $u_0$ whenever $X_t>u_0$ for some $t\in[0,n]$.  Relation \eqref{eq:normedLevelSetmu}  with  $\mu=\delta_{\{ u_0 \}}$ writes here
$$
\frac{ \nu_1\left(\{  t\in [0,n]: X_t>u_0 \}\right) -n \bar{F}_{X}(u_0)  }{\sqrt{n}}\tod N(0,\sigma^2_{\delta_{\{ u_0 \}},X}).
$$
Then the long range dependence in the sense of Definition \ref{def:lrd} of the stock price $X$  (i.e., $\sigma^2_{\delta_{\{ u_0 \}},X} =+\infty$) means that the amount of time within $[0,n]$ at which the option may be exercised is not asymptotically normal for large time horizons $n$.  On the contrary, the s.r.d. of stock $X$ means asymptotic normality of this time span for {\it any}  price $u_0$ for which the option was issued provided that $X$ satisfies conditions of papers \cite{MeshShash11} or \cite{Spodarev14}.

In terms of potential theory, the value $\sigma^2_{\mu,X}$ in Definition \ref{def:lrd} is the energy of measure $\mu$ with symmetric kernel $K(u,v)=\int\limits_T |\cov_X(t,u,v)|\,dt$, cf. \cite[p. 77 ff.]{Landk}.

\paragraph{Self--similar random fields.}
We conclude this section with the formulation of the long range dependence in a special case of
self--similarity.

Let $X=\{X_t,t\in \R^d_+\}$ be a real valued multi--self--similar random field. By definition, it is stochastically continuous and there exist numbers $H_1,\ldots,H_d>0$ such that for a diagonal matrix $A=\mbox{diag}(a_1,\ldots,a_d)$ with $a_1,\ldots,a_d>0$ it holds
$$
\{   X_{At}, \; t\in\R^d_+\} \eqd \{ a_1^{H_1}\ldots a_d^{H_d}  X_t, \; t\in\R^d_+\} \,.
$$
Introduce the notation ${\bf 1}=(1,\ldots, 1)\in\R^d_+$ and $e^s=(e^{s_1}, \ldots,  e^{s_d} )$ for $s=(s_1,\ldots, s_d)\in\R^d$.   By \ \cite[Proposition 6]{DavPau18}, the field
$$
Y=\{  Y_s= e^{-\sum_{j=1}^d s_j H_j } X_{e^s}, \; s\in\R^d\}
$$
is stationary.
Using  Definition \ref{def:lrd} for $Y$ together with the substitution $t_i=e^{s_i}$, $i=1,\ldots, d$, we say that $X$ is s.r.d. if for any finite measure $\mu$ on $\R$ it holds
$$
\int\limits_{\R^d_+}\int\limits_{\R^2} \left| {\cov}\left(\ind \Big(X_{\bf 1}>u\Big),\ind\Big(X_{\bf t}>v\cdot \prod_{j=1}^d  t_j^{H_j}\Big) \right)    \right|\,  \frac{\mu(du)\, \mu(dv)\, dt}{\prod_{j=1}^d  t_j } <+\infty,
$$
where $dt=dt_1\ldots dt_d$ means integration with respect to Lebesgue measure in $\R^d_+$.
On the contrary, $X$ is l.r.d. if the above integral is infinite for some finite measure $\mu$ on $\R$.

\subsection{Checking the short or long range dependence} \label{subsec:CheckLRD}

Denote by $P_{\mu}(\cdot)=\mu(\cdot)/\mu(\R)$ the probability measure associated with the finite measure $\mu$ on $\R$. Let $U,V$ be two independent random variables with distribution $P_\mu$. Then the variance $\sigma^2_{\mu,X}$ from Definition \ref{def:lrd}  {becomes}
\begin{equation}\label{eq:VarEquiv}
\frac{\sigma^2_{\mu,X}}{\mu^2(\R)}= \int\limits_T \E |{\cov}_X(t,U,V)|\,dt= \int\limits_T \E |F_{X_0,X_t}(U,V)-F_{X_0}(U)F_{X_t}(V)
|\,dt.
\end{equation}
This relation is useful to check the s.r.d. of $X$ by showing the finiteness of  $\sigma^2_{\mu,X}$  for any i.i.d. random variables $U$ and $V$.
Definition \ref{def:lrd} is equivalent to the following lemma.
\begin{lemma} \label{lemm:equivDefLRD}
A stationary real--valued random field $X$  with marginal distribution function  $F_X$ is s.r.d. in the sense of Definition \ref{def:lrd} if
$$
\int\limits_T\int\limits_{{(Im \, F_X)^2}}| C_{0,t}(x,y)-xy |\, P_0(dx)\,  P_0(dy)\,dt<+\infty
$$
for any probability measure $P_0$ on ${Im \, F_X}$ where $C_{0,t}$ is the copula of the bivariate distribution of $(X_0, X_t)$, $t\in T$, and $Im \, F_X=F_X(\bar{\R})\subseteq [0,1]$ is the range of $F_X$ on $\bar{\R}=\R\cup\{+\infty\}\cup\{-\infty\}$.
$X$ is l.r.d. in the sense of Definition \ref{def:lrd} if there exists a probability measure $P_0$ on $Im \, F_X$ such that
the above integral is infinite.
\end{lemma}
\begin{proof}
By relation \eqref{eq:VarEquiv} and Sklar's theorem (cf. e.g. \cite[Theorem 2.2.1]{DurSemp16})
we have
for any finite measure $\mu$ on $\R$
$$
\sigma^2_{\mu,X}={\mu^2(\R)} \int\limits_T\int\limits_{\R^2}|C_{0,t}(F_X(u),F_X(v))-F_X(u)F_X(v)|\,    P_{\mu}(du)\, P_{\mu}(dv)  \,dt.
$$
The choice of $C_{0,t}$ is unique on $Im \, F_X$, cf. \cite[Lemma 2.2.9]{DurSemp16}.
Applying the substitution $x=F_X(u)$, $y=F_X(v)$  we get that
$$
\sigma^2_{\mu,X}={\mu^2(\R)} \int\limits_T\int\limits_{(Im \, F_X)^2}| C_{0,t}(x,y)-xy |\, P_0(dx)\, P_0(dy)\,dt,
$$
where  the probability measure $ P_0$ has a cumulative distribution function $\mu\left(  (-\infty, F_X^-(x)) \right)$, $x\in[0,1]$, and $F_X^-$ is the generalized inverse for $F_X$.
\end{proof}
Lemma \ref{lemm:equivDefLRD} implies that the new definition of memory is marginal--free, i.e., independent of the distribution of marginals $F_X$, if   $Im \, F_X=[0,1]$,
which is the case for absolutely continuous $F_X$. It essentially involves only the bivariate dependence structure encoded in the copula $C_{0,t}$.

If condition \eqref{eq:property} holds then application of the Fubini--Tonelli theorem leads to
\begin{equation*}\label{eq:sigma^2modified}
\sigma^2_{\mu,X}=\mu^2(\R) \int_T \cov \left( F_\mu (X_0),  F_\mu (X_t) \right) \, dt,
\end{equation*}
where $F_\mu (x)=P_\mu((-\infty, x))$ is the (left--side continuous) distribution function of probability measure $P_\mu$. In this case, the s.r.d. condition $\sigma^2_{\mu,X}<+\infty$ reads as a classical covariance summability property of the subordinated random field $Y_t=F_\mu (X_t)$, $t\in T$.

By stationarity of $X$, it holds $\cov_X(t,u,v)=\cov_X(-t,u,v)$ for any $t,-t\in T$, $u,v\in\R$. {Hence, in order to show l.r.d.} for $T=\R$ it is enough to check that
$$
\int\limits_0^\infty |\cov_X(t,u_0,u_0)|\,dt=+\infty $$
for some $u_0\in\R$.  For $T=\Z$ it is sufficient to consider
$
\sum_{t=1}^\infty |\cov_X(t,u_0,u_0)| =+\infty. $

\subsubsection{The short-range dependence for mixing random fields}
Let  $\mathcal{U}, \mathcal{V}$ be two sub-$\sigma-$algebras of $\mathcal{F}$. Introduce the \emph{ $z$--mixing coefficient} $z(\mathcal{U},\mathcal{V})$  (where $z\in\{\alpha,\beta,\phi, \psi, \rho \}$) as in \cite[p.3]{Doukhan94}. For instance, it is given for $z=\alpha$  by
$$
\alpha(\mathcal{U},\mathcal{V}) = \sup\left\{  \left| P(U \cap V)-P(U)P(V) \right|: \;U\in \mathcal{U}, \;V\in \mathcal{V} \right\}.
$$
Let $X=\{X_t, t\in T\}$ be a random field. Let $X_C=\{X_t, t\in C\}$, $C\subset T$, and $\sigma_{X_\mathcal{C}}$ be the $\sigma-$algebra generated by $X_C$. If $|C|$ is the cardinality of  a finite set  $C$ then the $z$-mixing coefficient of $X$ is given by
		\[ z_X(k, u, v) = \sup\{z(\sigma_{X_A},\sigma_{X_B}): \; d(A,B) \geq k,\; |A| \leq u, |B| \leq v\},\]	 	
		where $ u,v \in \N $ and $d(A,B)$ is the Hausdorff distance between finite subsets $A$ and $B$ generated by the metric on $\Rd$.
	The  interrelations between different mixing coefficients $z_X$, $z\in\{\alpha,\beta,\phi, \psi, \rho \}$  are given e.g. in \cite[p.4, Proposition 1]{Doukhan94}.

We state the result that links mixing properties and {the} short-range dependence.
The field $X$ may be non--Gaussian and have infinite variance.

\begin{theorem}\label{theo:Mixing}
		Let $X = \{X_t, t\in T\}$ be a stationary random field with  $z-$mixing rate  satisfying
		$\int_T z_X(\|t\|,1,1) \, d t < +\infty$
		where $z\in\{\alpha,\beta,\phi, \psi, \rho \}$.
		Then X is s.r.d. in the sense of Definition \ref{def:lrd}  with
		\[\int_T\int_{\mathbb{R}^2} |\cov_X(t, u, v)| \, {\mu}(d u) \, {\mu}(d v) \, d t \leq 8\int_T z_X(\|t\|,1,1) \, d t \cdot \mu^2(\R) < +\infty.\]
	\end{theorem}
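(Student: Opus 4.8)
The plan is to bound $|\cov_X(t,u,v)|$ pointwise, \emph{uniformly over the levels} $u,v\in\R$, by a constant multiple of the mixing rate $z_X(\|t\|,1,1)$, and then to integrate against $\mu\times\mu$; the hypothesis $\int_T z_X(\|t\|,1,1)\,dt<+\infty$ finishes the argument, and since $\mu$ will be an arbitrary finite measure this simultaneously yields the stated estimate and short range dependence in the sense of Definition~\ref{def:lrd}.

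First I would fix $t\in T$ and $u,v\in\R$ and write
$$\cov_X(t,u,v)=P\big(\{X_0>u\}\cap\{X_t>v\}\big)-P(X_0>u)\,P(X_t>v).$$
Put $U:=\{X_0>u\}$ and $V:=\{X_t>v\}$; then $U\in\sigma_{X_{\{0\}}}$, $V\in\sigma_{X_{\{t\}}}$, the Hausdorff distance between the singletons $\{0\}$ and $\{t\}$ equals $\|t\|$, and $|\{0\}|=|\{t\}|=1$, so $z(\sigma_{X_{\{0\}}},\sigma_{X_{\{t\}}})\le z_X(\|t\|,1,1)$ for each $z\in\{\alpha,\beta,\phi,\psi,\rho\}$. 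It therefore suffices to invoke the elementary covariance estimates for indicators of events $U\in\mathcal U$, $V\in\mathcal V$ measurable with respect to sub-$\sigma$-algebras $\mathcal U,\mathcal V$ of $\mathcal F$:
$$|\cov(\ind_U,\ind_V)|\ \le\ \min\Big\{\alpha(\mathcal U,\mathcal V),\ \tfrac12\beta(\mathcal U,\mathcal V),\ \phi(\mathcal U,\mathcal V),\ \psi(\mathcal U,\mathcal V),\ \tfrac14\rho(\mathcal U,\mathcal V)\Big\}.$$
The first bound is literally the definition of $\alpha$; the others are immediate, e.g. $|\cov(\ind_U,\ind_V)|=P(U)\,|P(V\mid U)-P(V)|\le\phi(\mathcal U,\mathcal V)$, and $|\cov(\ind_U,\ind_V)|\le\rho(\mathcal U,\mathcal V)\sqrt{\var(\ind_U)\,\var(\ind_V)}\le\tfrac14\rho(\mathcal U,\mathcal V)$, while the $\beta$-bound follows by applying the definition of $\beta$ to the two-point partitions $\{U,U^c\}$, $\{V,V^c\}$, all four of whose excess probabilities $P(\cdot\cap\cdot)-P(\cdot)P(\cdot)$ coincide in absolute value with $|\cov(\ind_U,\ind_V)|$. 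Combining, for whichever $z$ is in force,
$$|\cov_X(t,u,v)|\ \le\ 8\,z_X(\|t\|,1,1),\qquad t\in T,\ u,v\in\R,$$
a bound independent of $u$ and $v$ (and in fact valid with the much smaller constant $1$, resp. $\tfrac12$ for $z=\beta$, $\tfrac14$ for $z=\rho$).

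It then remains to integrate: for any finite measure $\mu$ on $\R$,
$$\int_T\int_{\R^2}|\cov_X(t,u,v)|\,\mu(du)\,\mu(dv)\,dt\ \le\ 8\int_T z_X(\|t\|,1,1)\,dt\cdot\mu^2(\R)\ <\ +\infty$$
by hypothesis. This is exactly the asserted estimate, and since it holds for every finite $\mu$, the field $X$ is short range dependent.

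I do not anticipate a genuine obstacle: once the pointwise covariance estimates above are in hand the theorem is immediate. The only two points deserving a little care are (i) noting that the pointwise bound on $|\cov_X(t,u,v)|$ does \emph{not} depend on the levels $u,v$, so that integration against $\mu\times\mu$ contributes only the scalar factor $\mu^2(\R)$ and no moment or integrability assumption on $X$ is needed anywhere; and (ii) covering all five mixing coefficients uniformly, which is most cleanly handled by the single display of covariance estimates above (all of which ultimately reduce to, or are comparable with, the $\alpha$-mixing bound).
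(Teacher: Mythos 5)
Your proof is correct and follows essentially the same route as the paper: a pointwise bound on $|\cov_X(t,u,v)|$ by the mixing coefficient, uniform in the levels $u,v$, followed by integration against $\mu\times\mu$. The only difference is that the paper reduces without loss of generality to the $\alpha$-mixing case and cites Doukhan's covariance inequality (with the constant $8$), whereas you verify the elementary indicator-covariance bounds for each of the five coefficients directly, which in fact yields sharper constants.
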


\begin{proof}
Without loss of generality, we prove the result for $\alpha$-mixing $X$.
Introduce random variables $\xi(u)=\ind(X_0>u)$, $\eta(v)=\ind(X_t>v)$, where $t \in T$, $u,v \in \mathbb{R}.$
		Then, by the covariance inequality in \cite[p. 9, Theorem 3]{Doukhan94} connecting the covariance of random variables with their mixing rates we have
		\begin{align*}	&\int_T\int_{\mathbb{R}^2} |\cov_X(t, u, v)| \mu(\ud u) \mu( \ud v) \ud t = \int_T\int_{\R^2} |\cov(\xi(u), \eta(v))| \mu(\ud u) \mu( \ud v) \ud t \\ &\le
		8 \int_T \alpha (\sigma_{X_0},\sigma_{X_t})\ud t \int_{\R^2 } \| \xi(u) \|_{\infty}  \| \eta(v)\|_{\infty} \mu (du) \mu (dv)  \\ &\leq
	8	\int_T   \alpha_X(\|t\|,1,1)\ud t \cdot \mu^2(\R) < +\infty,
		\end{align*}
		where $ \| Y \|_{\infty} = \mbox{Ess-sup} (Y)$.
\end{proof}
To illustrate the  above theorem, we let $Y=\{  Y_t, \, t\in \N \}$ to be a stationary a.s. non-negative $\psi-$mixing random sequence with univariate cumulative distribution function $F_Y$ and $\int_{\Rd} \psi_Y(\|t\|,1,1) \, d t < +\infty.$ Examples of $\psi$--mixing random sequences can be found e.g. in \cite[Example 4, p. 19]{Doukhan94} (see also references therein), \cite[Theorem 2.2]{Hein96}, \cite[Proof of Claim 2.5]{Rap17}, \cite{Bradley83}, \cite[p. 54-55]{Samur}. Let $F^{-1}_Z$ be  the quantile function of a random variable $Z$ with $\E Z^2=+\infty$.
Set $G(x)=F^{-1}_Z(F_Y(x))$, $x\ge 0$, then $X_t=G(Y_t)$, $t\in \N$ is  $\psi$--mixing as well. Moreover, it is s.r.d. by the last theorem and has infinite variance because of $X_0\eqd Z$.
\begin{remark}
For a Gaussian $\phi$--mixing random field $X$, the statement of Theorem \ref{theo:Mixing} is trivial, since such $X$ is $m$--dependent \cite[Theorem 17.3.2]{IbrLinnik71}, and the integral $\int\limits_0^\infty |\cov_X(t,u,v)|\,dt$ in Definition \ref{def:lrd} is bounded by $2m$ for any $u,v\in\R$.
\end{remark}

\subsection{Subordinated Gaussian random fields}
\label{subsec:SubGaussian}

Recall that $\varphi(x) $ is the density of the standard normal law. We use the notation $\Phi(x)$ for its c.d.f. Introduce the Hermite polynomials $H_n$ of degree $n\in \N_0$ by
$$
H_n(x)=(-1)^n  \varphi^{(n)}(x) / \varphi (x)
$$
where $\varphi^{(n)}$ is the $n$-th derivative of $\varphi$.
Clearly, it holds
\[H_0(x)=1,\quad  H_1(x)=x,\quad\ H_2(x)=x^2-1,\quad H_3(x)=x^3-3x, \quad \ldots\]
For even orders $n$, Hermite polynomials are even functions, whereas for odd $n$ they are odd functions.
It is well known that Hermite polynomials form an orthogonal basis in $L^2_{\varphi}(\R)$. For any function $f\in L^2_{\varphi}(\R)$ with $\langle f, 1\rangle_{\varphi} = 0$  let
$$
\rg (f)=\min\{n\in\N: \langle f, H_n\rangle_{\varphi} \neq 0 \}
$$
be the Hermite rank of $f$. Furthermore, the Hermite rank can also be defined for functions $f\not\in L^2_{\varphi}(\R)$, as long as $\langle  |f|^{1+\theta}, \varphi\rangle<\infty$ for some $\theta\in (0,1)$; see \cite{sly:heyde:2008} or \cite[Section 4.3.5]{beran:kulik:2013}.

Let $Y = \{Y_t, t\in T\}$ be a stationary centered Gaussian real-valued random field with $\var \, Y_t=1$ and $C_Y(t)=\cov(Y_0,Y_t),$  $t\in T$. The subordinated Gaussian random field $X$ is defined by
$X_t=G(Y_t), \; t\in T,$ where $G: \mathbb{R}\rightarrow Im(G)\subseteq\mathbb{R}$ is a measurable function.

Assume first that $X$ is square integrable.
The following lemma is proven in    \cite[Lemma 10.2]{Rozanov67}:   
\begin{lemma}\label{lem:CovGauss}
Let $Z_1, Z_2$ be  standard normal random variables with $\rho = cov(Z_1,Z_2)$, and let $F$, $G$ be functions satisfying  $\E F^2(Z_1), \E G^2(Z_1) < +\infty$. Then
    \[\cov(F(Z_1), G(Z_2)) = \sum_{k=1}^{\infty}\frac{\langle F, H_k\rangle_{\varphi}
    \langle G, H_k\rangle_{\varphi} }{k!}\rho^k . \]
\end{lemma}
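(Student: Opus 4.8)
The plan is to reduce the bivariate expectation to a univariate one by conditioning, and then to expand in the Hermite orthogonal basis of $L^2_{\varphi}(\R)$. First I would represent the Gaussian pair as $Z_1$ and $Z_2 = \rho Z_1 + \sqrt{1-\rho^2}\,\xi$, where $\xi$ is a standard normal random variable independent of $Z_1$; this reproduces the correct joint law. Define the conditional expectation operator $(T_{\rho}G)(x) = \E\bigl[G(\rho x + \sqrt{1-\rho^2}\,\xi)\bigr]$, so that $\widetilde G := T_{\rho}G$ satisfies $\E[G(Z_2)\mid Z_1] = \widetilde G(Z_1)$ almost surely. By Jensen's inequality, $\E\,\widetilde G^2(Z_1)\le \E\,G^2(Z_2) < +\infty$, so $\widetilde G \in L^2_{\varphi}(\R)$, and the tower property gives $\E[F(Z_1)G(Z_2)] = \E[F(Z_1)\widetilde G(Z_1)] = \langle F, \widetilde G\rangle_{\varphi}$.

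The key step is to identify the Hermite coefficients of $\widetilde G$. I would use the generating function $\sum_{n\ge 0}\frac{t^n}{n!}H_n(x) = e^{tx - t^2/2}$ and compute, for $Z_1 = x$ fixed, $\E\bigl[e^{tZ_2 - t^2/2}\mid Z_1 = x\bigr] = e^{t\rho x - t^2\rho^2/2} = \sum_{n\ge 0}\frac{(t\rho)^n}{n!}H_n(x)$; matching powers of $t$ yields $\E[H_n(Z_2)\mid Z_1 = x] = \rho^n H_n(x)$, i.e. $T_{\rho}H_n = \rho^n H_n$, so the Hermite polynomials are eigenfunctions of $T_{\rho}$. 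Since the joint law of $(Z_1,Z_2)$ is symmetric, $T_{\rho}$ is self-adjoint on $L^2_{\varphi}(\R)$, whence $\langle \widetilde G, H_k\rangle_{\varphi} = \langle G, T_{\rho}H_k\rangle_{\varphi} = \rho^k\langle G, H_k\rangle_{\varphi}$.

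Finally I would apply Parseval's identity in $L^2_{\varphi}(\R)$ with respect to the orthogonal system $\{H_k\}_{k\in\N_0}$, recalling $\langle H_k, H_k\rangle_{\varphi} = k!$:
\[
\langle F, \widetilde G\rangle_{\varphi} = \sum_{k=0}^{\infty}\frac{\langle F, H_k\rangle_{\varphi}\,\langle \widetilde G, H_k\rangle_{\varphi}}{k!} = \sum_{k=0}^{\infty}\frac{\langle F, H_k\rangle_{\varphi}\,\langle G, H_k\rangle_{\varphi}}{k!}\,\rho^k .
\]
The $k=0$ term equals $\langle F, 1\rangle_{\varphi}\,\langle G, 1\rangle_{\varphi} = \E F(Z_1)\cdot\E G(Z_2)$, and subtracting it from $\E[F(Z_1)G(Z_2)]$ gives the asserted expansion of $\cov(F(Z_1), G(Z_2))$.

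I expect the main obstacle to be the rigorous justification of the eigenfunction identity $T_{\rho}H_n = \rho^n H_n$ and of the coefficientwise matching implicit in the generating-function computation: one must either invoke $L^2_{\varphi}$-convergence of the generating series for $|t|$ small (or its absolute convergence after inserting moduli) together with dominated convergence, or else appeal directly to the known completeness of the Hermite system in $L^2_{\varphi}(\R)$. Everything else — Jensen, the tower property, self-adjointness of $T_{\rho}$ from exchangeability, and Parseval — is routine once $\widetilde G \in L^2_{\varphi}(\R)$ has been secured.
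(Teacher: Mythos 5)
Your proof is correct. Note, however, that the paper does not prove this lemma at all: it is quoted from Rozanov (Lemma 10.2 there), and the route taken in that reference --- and the one the paper itself uses in the proof of Theorem \ref{thm:subGaussSRD} via Cram\'er's formula \eqref{eq:bivariateGaussDensSeries} --- is Mehler's expansion of the bivariate Gaussian density, $f_{Z_1,Z_2}(x,y)=\varphi(x)\varphi(y)\sum_{k\ge 0}\rho^k H_k(x)H_k(y)/k!$, against which one integrates $F(x)G(y)$ termwise. Your argument is genuinely different in structure: you condition on $Z_1$, identify the Mehler (Ornstein--Uhlenbeck) operator $T_\rho$, show $T_\rho H_n=\rho^n H_n$ via the generating function, use exchangeability of $(Z_1,Z_2)$ for self-adjointness, and finish with Parseval. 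This buys you something concrete: the density-expansion route requires justifying the interchange of the series with the integral of a general $L^2$ integrand (the series converges in $L^2(\varphi\otimes\varphi)$ only for $|\rho|<1$, and the endpoint cases need separate treatment), whereas your argument works uniformly for $|\rho|\le 1$ and pushes all convergence issues into Parseval, which is automatic once $\widetilde G\in L^2_\varphi(\R)$ is secured by conditional Jensen. The one step you rightly flag --- rigorously extracting $\E[H_n(Z_2)\mid Z_1=x]=\rho^n H_n(x)$ from the generating function --- can be settled most cleanly by the addition formula $H_n(ax+by)=\sum_{k=0}^n\binom{n}{k}a^k b^{n-k}H_k(x)H_{n-k}(y)$ for $a^2+b^2=1$ together with $\E H_m(\xi)=0$ for $m\ge1$, which avoids any limit interchange; with that in place your proof is complete and self-contained, arguably more so than the citation the paper relies on.
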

{Let $C_X(t)=\cov(X_0,X_t),$ $t\in T$.}
Assuming $C_Y(t)\ge 0$ for all $t\in T$ and applying this lemma to our subordinated process $X=G(Y)$ we get that it is s.r.d. in the sense of Definition \ref{def:lrd}  if
 \begin{equation}\label{eq:CondSRDFVar}
    \int_T|C_X(t)| \, d t  =   \sum_{k=1}^{\infty}\frac{\langle G, H_k\rangle_{\varphi}^2}{k!}\int_T C_Y^k(t)\, d t <+\infty.
\end{equation}
We shall see that an analogous result holds also if $X$ has no finite second moment. Introduce the condition
\begin{enumerate}
\item[{\bf ($\rho$)}]   $ |C_Y(t)|<1$ for all $t\neq 0$ if  $T$ is countable and for $\nu_d$--almost every $t\in T$ if $T$ is uncountable.
\end{enumerate}
The following result gives the conditions for s.r.d of a subordinated Gaussian random field, {without a moment assumption}. Its proof is given in Appendix.
\begin{theorem}\label{thm:subGaussSRD}
Let $Y$  be a Gaussian random field introduced above.  Let $X$ be  a subordinated Gaussian random field defined by $X_t=G(Y_t),$ $t\in T,$ where $G$ is a right-continuous strictly monotone (increasing or decreasing) function.
Assume that the condition {\bf ($\rho$)} holds. Let
\begin{equation}\label{eq:b_k}
b_k(\mu)=\Big(\int_{Im(G)}H_{k}(G^-(u))\varphi(G^-(u)) \, \mu(d u)\Big)^2
\end{equation}
where $G^-$ is the generalized inverse of $G$ if $G$ is increasing or of $-G$ if $G$ is decreasing. Then $X$ is s.r.d. in the sense of Definition \ref{def:lrd} if and only if
\begin{equation}\label{eq:condSRD_general}
\sum_{k=1}^{\infty}\frac{b_{k-1}(\mu)}{k!} \int_T |C_Y(t)| C_Y^{k-1}(t) \, d t <+\infty
\end{equation}
for any finite measure $\mu$ on $\R$.
\end{theorem}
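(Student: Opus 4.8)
The plan is to reduce $\cov_X(t,u,v)$ to a covariance of indicators of the underlying Gaussian field $Y$, expand it in Hermite polynomials, and then use a sign argument to dispose of the absolute value before integrating in $t$. Fix a finite measure $\mu$. Since $G$ is right-continuous and strictly monotone, for every $u$ the event $\{X_0>u\}=\{G(Y_0)>u\}$ agrees, up to a $P$-null set, with a half-line event for $Y_0$: in the increasing case $\ind(X_0>u)=\ind(Y_0>G^-(u))$ a.s., and in the decreasing case $\ind(X_0>u)=\ind(Y_0<G^-(u))=1-\ind(Y_0>G^-(u))$ a.s. Hence $\cov_X(t,u,v)=\cov\bigl(\ind(Y_0>G^-(u)),\ind(Y_t>G^-(v))\bigr)$, the two stray minus signs in the decreasing case cancelling in the product.

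Next I would expand this covariance. The indicators $\ind_{(a,\infty)}$ are bounded, hence lie in $L^2_\varphi(\R)$, so Lemma~\ref{lem:CovGauss} applies with $Z_1=Y_0$, $Z_2=Y_t$ and correlation $\rho(t)$. Combined with the classical identity $\langle\ind_{(a,\infty)},H_k\rangle_\varphi=\int_a^\infty H_k(x)\varphi(x)\,dx=H_{k-1}(a)\varphi(a)$ for $k\ge1$ — which follows from $H_k\varphi=-(H_{k-1}\varphi)'$ and $H_{k-1}(x)\varphi(x)\to0$ as $x\to+\infty$ — this gives
\[
\cov_X(t,u,v)=\sum_{k=1}^{\infty}\frac{H_{k-1}(G^-(u))\varphi(G^-(u))\,H_{k-1}(G^-(v))\varphi(G^-(v))}{k!}\,\rho(t)^k .
\]
For every $t$ with $|\rho(t)|<1$ (condition {\bf ($\rho$)}), the bound $\sup_x|H_k(x)\varphi(x)|=O(\sqrt{k!})$ yields $b_{k-1}(\mu)=O((k-1)!)$, so both this series and the associated double series of absolute values integrated against $\mu(du)\,\mu(dv)$ are dominated by a constant multiple of $\sum_k|\rho(t)|^k/k<\infty$. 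Fubini's theorem then gives $\int_{\R^2}\cov_X(t,u,v)\,\mu(du)\,\mu(dv)=\sum_{k\ge1}\frac{b_{k-1}(\mu)}{k!}\rho(t)^k$, with $b_{k-1}(\mu)\ge0$ as in \eqref{eq:b_k}.

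To remove the absolute value I would note that $\cov_X(t,u,v)$ has constant sign in $(u,v)$: identifying it with the Gaussian expression \eqref{eq:CovIndGauss} evaluated at levels $G^-(u),G^-(v)$ and correlation $\rho(t)$, whose integrand is strictly positive, one gets $\operatorname{sign}\cov_X(t,u,v)=\operatorname{sign}\rho(t)$ for all $u,v$ (for $\nu_d$-a.e.\ $t$ when $T$ is uncountable). Thus $|\cov_X(t,u,v)|=\operatorname{sign}(\rho(t))\,\cov_X(t,u,v)$, so by the previous display
\[
\int_{\R^2}|\cov_X(t,u,v)|\,\mu(du)\,\mu(dv)=\sum_{k=1}^{\infty}\frac{b_{k-1}(\mu)}{k!}\,|\rho(t)|\,\rho(t)^{k-1}\ge0 .
\]
Integrating over $t\in T$ and interchanging $\sum_k$ with $\int_T$ then yields $\sigma^2_{\mu,X}=\sum_{k\ge1}\frac{b_{k-1}(\mu)}{k!}\int_T|\rho(t)|\rho(t)^{k-1}\,dt$, which is finite by \eqref{eq:condSRD_general}; as $\mu$ was arbitrary, $X$ is s.r.d.

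The step I expect to be delicate is this last interchange of $\sum_k$ and $\int_T$. When $\rho\ge0$ on $T$ the summands are nonnegative and Tonelli applies at once; for general $\rho$ (all that condition {\bf ($\rho$)} requires) the factor $\rho(t)^{k-1}$ changes sign for even $k$, so one must split off the (nonnegative) odd-$k$ part, dominate the even-$k$ terms using $\bigl|\,|\rho(t)|\rho(t)^{k-1}\bigr|=|\rho(t)|^k$, and verify absolute summability from \eqref{eq:condSRD_general}. The reduction to Gaussian indicators and the Hermite identity are, by comparison, routine.
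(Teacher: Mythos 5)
Your proposal is correct and follows essentially the same route as the paper: reduce $\cov_X(t,u,v)$ to the covariance of half-line indicators of $Y$ at levels $G^-(u),G^-(v)$, expand in Hermite polynomials to obtain $\int_{\R^2}|\cov_X(t,u,v)|\,\mu(du)\,\mu(dv)=\sum_{k\ge1}b_{k-1}(\mu)\,|\rho(t)|\rho(t)^{k-1}/k!$, and interchange $\sum_k$ with $\int_T$ by splitting $T$ according to the sign of $\rho$. The only real difference is in how the series is produced --- the paper integrates Cram\'er's expansion of the bivariate Gaussian density over $r\in[0,|\rho(t)|]$, which yields the factor $|\rho(t)|^{k}$ and the sign automatically, whereas you apply Lemma \ref{lem:CovGauss} to the indicators together with $\langle\ind_{(a,\infty)},H_k\rangle_\varphi=H_{k-1}(a)\varphi(a)$ and recover the sign separately from \eqref{eq:CovIndGauss} --- and the delicate final interchange you flag is treated no more rigorously in the paper itself (Tonelli after splitting into $T^{+}$ and $T^{-}$).
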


\begin{corollary}\label{cor:subGauss}
Assume that the conditions of Theorem \ref{thm:subGaussSRD} hold.
\begin{enumerate}
\item[{\rm (i)}] Let $\mu(dx)=f(x) \, dx$ for $f\in L^1(\R)$ such that $f(x)\ge 0$ for all $x\in\R$.  If $G\in C^1(\R)$ and Im$(G)=\mathbb{R}$ then $b_k (\mu)= \langle G^\prime f(G), H_k\rangle_{\varphi}^2,$ $k\in \N$.  In this case, all coefficients \emph{$b_k(\mu)$ are finite} if for some $\theta \in (0,1)$  it holds $E[|G^\prime (Y_0)f(G(Y_0))|^{1+\theta}] < +\infty.$ If $G^\prime f(G)$ is an even function then $b_k(\mu)=0$ for all natural odd $k$.
\item[{\rm (ii)}] If $X_t=G(|Y_t|),$ $t\in T,$ then the s.r.d. condition  \eqref{eq:condSRD_general}  simplifies to
\begin{equation}\label{eq:condSRD_even}
\sum_{k=1}^{\infty}\frac{b_{2k-1}(\mu)}{(2k)!} \int_T  C_Y^{2k}(t) \, d t <+\infty.
\end{equation}
\end{enumerate}
\end{corollary}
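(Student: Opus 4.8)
The plan is to treat the two items separately; both reduce to Hermite-expansion computations, the second mirroring the proof of Theorem~\ref{thm:subGaussSRD} with the monotone subordinating function replaced by an even one.

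\emph{Item 1.} Since $G$ is continuous, strictly monotone and $Im(G)=\R$, its generalized inverse is the genuine inverse $G^{-1}=G^-\in C^1(\R)$. Take $G$ increasing (the decreasing case being similar, with the evident sign changes). Substituting $u=G(y)$ in \eqref{eq:b_k} --- so that $G^-(u)=y$, $\mu(du)=f(u)\,du=f(G(y))G'(y)\,dy$, and $y$ runs over $\R$ --- gives
\[
\int_{Im(G)}H_k(G^-(u))\varphi(G^-(u))\,\mu(du)=\int_\R H_k(y)\,\big(G'(y)f(G(y))\big)\varphi(y)\,dy=\langle G'f(G),H_k\rangle_\varphi,
\]
hence $b_k(\mu)=\langle G'f(G),H_k\rangle_\varphi^2$. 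For finiteness, write $b_k(\mu)=\big(\E\big[G'(Y_0)f(G(Y_0))H_k(Y_0)\big]\big)^2$ and bound $\E\big|G'(Y_0)f(G(Y_0))H_k(Y_0)\big|$ by Hölder's inequality with exponents $1+\theta$ and $(1+\theta)/\theta$: the factor $\big(\E|H_k(Y_0)|^{(1+\theta)/\theta}\big)^{\theta/(1+\theta)}$ is finite because $H_k$ is a polynomial and $Y_0$ is standard normal, and $\big(\E|G'(Y_0)f(G(Y_0))|^{1+\theta}\big)^{1/(1+\theta)}$ is finite by assumption. Finally, if $G'f(G)$ is even then for odd $k$ the integrand $G'(y)f(G(y))H_k(y)\varphi(y)$ is odd in $y$ ($\varphi$ even, $H_k$ odd), so $\langle G'f(G),H_k\rangle_\varphi=0=b_k(\mu)$.

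\emph{Item 2.} Now $X_t=G(|Y_t|)$, so the subordinating function $y\mapsto G(|y|)$ is even but not monotone, and Theorem~\ref{thm:subGaussSRD} does not apply verbatim; I would instead rerun its proof. Taking $G$ increasing and replacing it by its restriction to $[0,\infty)$ throughout (only the values $G(|y|)$, $y\in\R$, occur, so that $Im(G)$ and $G^-\ge0$ in \eqref{eq:b_k} are understood accordingly), one has $\ind(X_0>u)=\psi_{c_u}(Y_0)$ with $\psi_c(y):=\ind(|y|>c)$ and $c_u:=G^-(u)\ge0$. Put $\tilde\Psi(y):=\int_\R\psi_{G^-(u)}(y)\,\mu(du)=\mu\big(\{u:\,u<G(|y|)\}\big)$, a bounded \emph{even} function. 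By Fubini, $\int_{\R^2}\cov_X(t,u,v)\,\mu(du)\,\mu(dv)=\cov(\tilde\Psi(Y_0),\tilde\Psi(Y_t))$; applying Lemma~\ref{lem:CovGauss} to $\tilde\Psi$ and using $\langle\tilde\Psi,H_j\rangle_\varphi=0$ for odd $j$ (by evenness) turns this into $\sum_{k\ge1}\frac{\langle\tilde\Psi,H_{2k}\rangle_\varphi^2}{(2k)!}\rho(t)^{2k}$. From $H_n\varphi=(-1)^n\varphi^{(n)}$ one gets $\int_c^\infty H_{2k}(y)\varphi(y)\,dy=H_{2k-1}(c)\varphi(c)$, so by evenness $\langle\psi_c,H_{2k}\rangle_\varphi=2H_{2k-1}(c)\varphi(c)$, whence $\langle\tilde\Psi,H_{2k}\rangle_\varphi=2\int H_{2k-1}(G^-(u))\varphi(G^-(u))\,\mu(du)$, i.e.\ $\langle\tilde\Psi,H_{2k}\rangle_\varphi^2=4\,b_{2k-1}(\mu)$.

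What makes the coefficients $b_{2k-1}(\mu)$ themselves --- rather than their absolute-value analogues --- appear, and this is the crux, is that $\cov_X(t,u,v)\ge0$ for all $t,u,v$, so the absolute value in the definition of $\sigma^2_{\mu,X}$ may be dropped. Indeed, conditionally on $Y_0=y$ one has $Y_t\sim N(\rho(t)y,\,1-\rho(t)^2)$, so $P(|Y_t|>c_v\mid Y_0=y)=\bar\Phi\big(\tfrac{c_v-\rho(t)y}{c}\big)+\bar\Phi\big(\tfrac{c_v+\rho(t)y}{c}\big)$ with $c=\sqrt{1-\rho(t)^2}>0$ by condition~\textbf{($\rho$)}; since $c_v\ge0$, the right-hand side is even in $y$ and nondecreasing in $|y|$, hence equals $h(|y|)$ for some nondecreasing $h$, and $\cov_X(t,u,v)=\cov\big(\ind(|Y_0|>c_u),\,h(|Y_0|)\big)\ge0$ as the covariance of two nondecreasing functions of $|Y_0|$. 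Consequently
\[
\sigma^2_{\mu,X}=\int_T\int_{\R^2}\cov_X(t,u,v)\,\mu(du)\,\mu(dv)\,dt=\int_T\cov(\tilde\Psi(Y_0),\tilde\Psi(Y_t))\,dt=4\sum_{k\ge1}\frac{b_{2k-1}(\mu)}{(2k)!}\int_T\rho(t)^{2k}\,dt,
\]
the last interchange being legitimate by Tonelli since every summand is nonnegative; this is finite under \eqref{eq:condSRD_even} (the constant $4$ is irrelevant), so $X$ is s.r.d. The main obstacle is precisely this nonnegativity of $\cov_X$: in Theorem~\ref{thm:subGaussSRD} it comes from classical Gaussian association (Pitt) since $G$ is monotone, whereas for $X_t=G(|Y_t|)$ the function $\psi_c$ is not monotone and one must argue instead through the stochastic monotonicity of $|Y_t|$ in $|Y_0|$; the rest is the bookkeeping by which the vanishing of the odd Hermite coefficients turns $b_{k-1}$ and $|\rho(t)|\rho(t)^{k-1}$ into $b_{2k-1}$ and $\rho(t)^{2k}$.
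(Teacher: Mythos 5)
Your proof is correct, and while item~1 coincides with the paper's (one--line) argument --- the substitution $u=G(y)$ in \eqref{eq:b_k} plus a H\"older bound for the finiteness of the Hermite coefficients, which is exactly what the cited Lemma~4.21 of Beran et al.\ supplies --- your item~2 takes a genuinely different route. The paper reruns the proof of Theorem~\ref{thm:subGaussSRD}: using the sign--flip invariance of the Gaussian density it writes $\cov_X(t,u,v)$ as twice the sum of $\cov(\ind(Y_0>x),\ind(Y_t>y))$ and $\cov(\ind(Y_0>x),\ind(-Y_t>y))$ with $x=G^-(u)$, $y=G^-(v)\ge 0$, reads off the sign of $\cov_X$ from the explicit representation \eqref{eq:CovIndGauss} (the integrand becomes a difference of two exponentials which is nonnegative because $xy\ge 0$), and then expands the bivariate density via Cram\'er's series \eqref{eq:bivariateGaussDensSeries}, the factor $1-(-1)^k$ killing the even terms. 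You instead (i) prove $\cov_X(t,u,v)\ge 0$ by conditioning on $Y_0$ and observing that $P(|Y_t|>c_v\mid Y_0=y)$ is a nondecreasing function of $|y|$, so that $\cov_X$ is the covariance of two nondecreasing functions of $|Y_0|$; (ii) aggregate the indicators over $\mu$ into the single bounded even function $\tilde\Psi$ and apply Lemma~\ref{lem:CovGauss}; and (iii) identify $\langle\ind(|\cdot|>c),H_{2k}\rangle_\varphi=2H_{2k-1}(c)\varphi(c)$ directly from $H_n\varphi=(-1)^n\varphi^{(n)}$. Both roads lead to the same identity $\sigma^2_{\mu,X}=4\sum_{k\ge1}\frac{b_{2k-1}(\mu)}{(2k)!}\int_T\rho^{2k}(t)\,dt$. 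Your positivity argument is arguably more robust --- it does not rely on the closed form \eqref{eq:CovIndGauss} and only uses the stochastic monotonicity of $|Y_t|$ given $|Y_0|$ --- and routing the computation through Lemma~\ref{lem:CovGauss} spares you the term--by--term integration of the density series in $r$ and over $\mu\times\mu$; the paper's version, on the other hand, keeps the levels $u,v$ separate throughout and therefore also delivers the pointwise Hermite expansion of $|\cov_X(t,u,v)|$ itself, not merely of its $\mu\times\mu$--integral.
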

\begin{remark}\label{rem:subGauss_LRD}
Based on Theorem \ref{thm:subGaussSRD} and Corollary \ref{cor:subGauss},  the l.r.d.
in the sense of Definition \ref{def:lrd}
 can also be formulated as follows:
\begin{enumerate}
\item[{\rm (i)}]$X=G(Y)$ is l.r.d. if $\exists u_0\in\R:$ $b_k(\delta_{\{u_0\}})<+\infty$ for all $k$ and the series \eqref{eq:condSRD_general}   diverges to $+\infty$.
\item[{\rm (ii)}] If the initial process $Y$ is s.r.d. then all powers of $C_Y$ are integrable on $T$ and the long memory of $X=G(|Y|)$ can only come from function $G$. This can happen e.g. if its \ coefficients $b_k(\mu)$ decrease to zero slowly enough.
Conversely, assume that $Y$ is l.r.d., $ 0 < b_{2k-1}(\mu)< +\infty$ for all $k \in \mathbb{N}$ and some finite measure $\mu$. If  there exists $ k\in \mathbb{N}$ s.t.  $\int_T C_Y^{2k}(t) \, d t = +\infty $
then $X$ is l.r.d.
\end{enumerate}
\end{remark}
Let us illustrate the last point of Remark \ref{rem:subGauss_LRD} by an example.

\begin{example}\label{ex:e_x2}

Let $G(x)=e^{x^2/(2\alpha)}$, $\alpha>0$,  $T=\R^d$. Then it is easy to see that
$$
P (|X_0|>x)=L(x) x^{-\alpha},
$$
where $L(x)=\sqrt{ 2/(\pi \log x)  } $.   For $\alpha\in(1,2]$, it holds
$\E \, X_0<\infty$, $\E \, X_0^2=+\infty$.

To compute $b_{2k-1}(\mu)$, we notice that
$$
\sqrt{b_{2k-1}(\mu)}=\frac{1}{\sqrt{2\pi}}\int\limits_1^\infty u^{-\alpha} H_{2k-1} (\sqrt{2\alpha \log u}) \, \mu(du), \quad k\in \N.
$$
Using the upper bound
$
 | H_{2k-1}(x) | \le x   e^{x^2/4} (2k-1)!! /4$, $x\ge 0$
from \cite[p. 787]{AbrSte72} one can show that
\begin{equation*}
\begin{split}
b_{2k-1}(\mu) &\le  \frac{  \alpha}{16\pi} [(2k-1)!!]^2 \left( \int_1^\infty u^{-\alpha/2} \sqrt{\log u} \, \mu(du) \right)^2 \\
&\le  \frac{  \alpha }{  4\pi  } \mu^2\big([1,+\infty)\big) [ (2k-1)!!]^2 <+\infty \\
\end{split}
\end{equation*}
for all $ k\in\N.$

{We note that the use of the finite measure $\mu$ is crucial here, since e.g. in case of the Lebesgue measure the integral $\int_1^\infty u^{-\alpha/2} \sqrt{\log u} \, \mu(du)$ is infinite for $\alpha\leq2$.}

Now by Stirling's formula \cite[Theorem 1.4.2]{AndrewsAskeyRoy}, we get
\begin{equation}\label{eq:asympt}
\frac{[(2k-1)!!]^2 }{  (2k)! } \sim \frac{c_3}{\sqrt{k}}, \quad k\to +\infty
\end{equation}
for $c_3>0$, so
\begin{equation}\label{eq:bk_2k}
\frac{b_{2k-1}(\mu)}{(2k)!}= O\left(      \frac{1}{\sqrt{k} } \right) , \quad k\to +\infty.
\end{equation}
Assume that  $C_Y(t)\sim \|t\|^{-\eta}$ as $\|t\|\to +\infty$, $\eta>0$.
Then $X=e^{Y^2/(2\alpha)}$, $\alpha >0$, is
\begin{itemize}
\item l.r.d. if $\eta\in (0, d/2]$ since then
{\begin{equation*}
\sum_{k=1}^{\infty}\frac{b_{2k-1}(\mu)}{(2k)!} \int_T  C_Y^{2k}(t) \, d t =+\infty.
\end{equation*}
}
\item s.r.d. if $\eta> d/2$  since then  we  have
$$\int_{\R^d} C_Y^{2k}(t)\, dt =O(k^{-1}) \quad \mbox{as } \; k\to +\infty,$$
and the series \eqref{eq:condSRD_even} behaves as
$
\sum\limits_{k=1}^\infty \frac{1}{k^{3/2} }<+\infty.
$
\end{itemize}
Here the source of long memory of $X$ is the l.r.d. field $Y$.
If $\alpha>2$  the variance of $X_0$ is finite, and our results agree with the definition in \eqref{lrdfv} by relation
\eqref{eq:CondSRDFVar} if we notice that $\rg (G)=2$. {However, the main point of this example is that we have the same transition from short to long memory (that is $\eta=d/2$) for both finite- and infinite variance fields.}

Note that for $\eta\in (d/2, d)$ the Gaussian field $Y$ is l.r.d. but the subordinated field $X=e^{Y^2/(2\alpha)}$ is s.r.d. {This agrees with the classical theory in case of finite variance, but is novel in case of infinite variance.}
\end{example}

\subsection{Stochastic volatility models}
\label{subsec:svm}

We present a way of constructing random fields with long memory by introducing a random  volatility $G(Y_t)$ (being a deterministic function of a random scaling field $Y=\{ Y_t, t\in T \}$) of a random field $Z=\{ Z_t, t\in T \}$. We assume that $Y$ and $Z$ are independent. An overview of random volatility models and their applications in finance can be found in   e.g.  \cite{Shep05} and \cite[Part II]{AndDavKreMik09}.  For each $t\in T$, $X_t=G(Y_t)Z_t$ is a scale mixture of $G(Y_t)$ and $Z_t$, see  \cite[Chapter VI, p. 345]{SteuHarn04}. Let $\bar{F}_Z=1-F_Z$ be the marginal tail distribution function of $Z_t$ for stationary $Z$.


For a finite measure $\mu$,  introduce the functional
$$
{D_{\mu}}\left(G(Y),Z_0\right)=\int\limits_T\int\limits_{\R^2}\cov\left( \bar{F}_Z\big(u/G(Y_0)\big),
\bar{F}_Z\big(v/G(Y_t)\big) \right)\mu(\ud u) \mu( \ud v)\,dt.
$$
The next lemma follows trivially from relation \eqref{eq:condcov},  independence of $Y$ and $Z$ and Tonelli theorem.

\begin{lemma}\label{thm:lrd_randvol}
Let a random field  $X=\{ X_t, t\in T \}$ be given by $X_t=G(Y_t)Z_t$ where
 $Y=\{ Y_t, t\in T \}$ and  $Z=\{ Z_t, t\in T \}$ are independent stationary random fields, $Z$ has property \eqref{eq:property}, $G: \R\to\R_{\pm}$ and $P\big(G(Y_t)=0\big)=0$ for all $t\in T$. Then
\begin{multline}\label{eq:covRandVol}
\int\limits_T \int\limits_{\R^2}
\cov_X(t,u,v)
\, \mu(du)\,\mu(dv)\,dt= D_{\mu}\left(G(Y),Z_0\right) \\ +  \int\limits_T \int\limits_{\R^2}
\E \left[ \cov_Z(t,u/G(Y_0),v/G(Y_t))\right]
\, \mu(du)\,\mu(dv)\,dt.
\end{multline}
\end{lemma}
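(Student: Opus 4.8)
The plan is to condition on the $\sigma$-algebra ${\cal G}:=\sigma(Y_t,\,t\in T)$ generated by the whole scaling field and to apply the conditional covariance identity \eqref{eq:condcov} with ${\cal A}={\cal G}$. Since $G$ takes values in $\R_{\pm}$, i.e. does not change sign, and $P\big(G(Y_t)=0\big)=0$ for every $t$, it is no loss of generality to assume $G(Y_t)\in(0,+\infty)$ a.s. for all $t$ (the case of a negative $G$ being symmetric, up to complementing the indicators and the obvious sign adjustments).

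Next I would fix $t\in T$, $u,v\in\R$ and compute the conditional moments of the two indicators given ${\cal G}$. Since $Z$ is independent of $Y$ while the thresholds $u/G(Y_0)$ and $v/G(Y_t)$ are ${\cal G}$-measurable, a standard ``freezing'' argument (conditionally on ${\cal G}$ the pair $(Z_0,Z_t)$ keeps its unconditional joint law) gives, a.s.,
\begin{align*}
\E\big[\ind(X_0>u)\mid{\cal G}\big]&=\bar{F}_Z\big(u/G(Y_0)\big),\qquad \E\big[\ind(X_t>v)\mid{\cal G}\big]=\bar{F}_Z\big(v/G(Y_t)\big),\\
\cov\big(\ind(X_0>u),\ind(X_t>v)\,\big|\,{\cal G}\big)&=\cov_Z\big(t,\,u/G(Y_0),\,v/G(Y_t)\big),
\end{align*}
where on the right-hand sides the ${\cal G}$-measurable arguments are substituted only after the unconditional law of $Z$ has been used, and where stationarity of $Z$ ensures $Z_0\eqd Z_t$ with tail $\bar{F}_Z$. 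Inserting these three expressions into \eqref{eq:condcov} yields the pointwise decomposition
\begin{equation}\label{eq:pwdecomp}
\cov_X(t,u,v)=\E\big[\cov_Z\big(t,u/G(Y_0),v/G(Y_t)\big)\big]+\cov\big(\bar{F}_Z(u/G(Y_0)),\bar{F}_Z(v/G(Y_t))\big).
\end{equation}

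It then remains to integrate \eqref{eq:pwdecomp} against $\mu(\ud u)\,\mu(\ud v)\,\ud t$ over $\R^2\times T$. The second summand on the right integrates, by definition, to $D_{\mu}\big(G(Y),Z_0\big)$, and since $\bar{F}_Z$ is bounded the $\mu\times\mu$-integration is unproblematic. For the first summand I would use that $Z$ has property \eqref{eq:property}: this forces $\cov_Z(t,\cdot,\cdot)$ to be of one fixed sign, hence $\cov_Z\big(t,u/G(Y_0),v/G(Y_t)\big)$ is of that same sign for all $\omega,t,u,v$, so by Tonelli's theorem the expectation may be pulled outside the triple integral, producing exactly the second term on the right-hand side of \eqref{eq:covRandVol}. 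As $\cov_X$ is bounded and the constant-sign $\cov_Z$-term is always well defined in $[-\infty,+\infty]$, the identity \eqref{eq:covRandVol} follows, all three integrals being then simultaneously meaningful.

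The step I expect to require the most care is the freezing of the conditional expectations: one must justify rigorously that, because $\sigma(Z_t,\,t\in T)$ and ${\cal G}$ are independent, evaluating ${\cal G}$-measurable thresholds inside Borel functionals of $(Z_0,Z_t)$ commutes with taking conditional expectation given ${\cal G}$ — this is precisely where independence of $Y$ and $Z$ and the non-degeneracy $P\big(G(Y_t)=0\big)=0$ (which makes $u/G(Y_0)$ well defined and ${\cal G}$-measurable) enter. The remaining ingredients, namely the sign reduction for $G$ and the Fubini/Tonelli bookkeeping, are routine.
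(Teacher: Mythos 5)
Your proposal is correct and follows essentially the same route as the paper: condition on $\sigma(Y)$, apply the conditional covariance identity \eqref{eq:condcov} to the two indicators to obtain the pointwise decomposition into $\E\left[\cov_Z(t,u/G(Y_0),v/G(Y_t))\right]$ plus $\cov\left(\bar{F}_Z(u/G(Y_0)),\bar{F}_Z(v/G(Y_t))\right)$, and then integrate using independence of $Y$ and $Z$ and Tonelli's theorem. Your explicit appeal to property \eqref{eq:property} to secure the fixed sign needed for Tonelli is a point the paper leaves implicit, but it is the same argument.
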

Let us  illustrate the use of Lemma \ref{thm:lrd_randvol}.
\begin{corollary}\label{cor1}
Let the random field $X$  be given by $X_t=A Z_t$, $t\in T$, $|T|=+\infty$ where $A>0$ a.s., $A$ and $Z$ are independent and $Z\in\pa$ is stationary.  Then $X$ is l.r.d.
in the sense of Definition \ref{def:lrd}
if there exists $u_0\in \R$:
$\bar{F}_Z\big(u_0/A\big)\neq const$ a.s.
\end{corollary}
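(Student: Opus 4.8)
The plan is to apply Lemma \ref{thm:lrd_randvol} with $Y$ an i.i.d.\ field (so that $G(Y_t) \equiv A$ makes sense, or more directly just set $G(Y_t) = A$ for all $t$), so that $X_t = A Z_t$. Since $A > 0$ a.s.\ and $Z$ has property \eqref{eq:property} (being \pa), the hypotheses of the lemma are satisfied. Formula \eqref{eq:covRandVol} then splits the target quantity $\int_T \int_{\R^2} \cov_X(t,u,v)\,\mu(du)\,\mu(dv)\,dt$ into two pieces: the term $D_\mu(A, Z_0)$ and the term $\int_T \int_{\R^2} \E[\cov_Z(t, u/A, v/A)]\,\mu(du)\,\mu(dv)\,dt$. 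The strategy is to pick a convenient finite measure $\mu$ and show that $D_\mu(A, Z_0) = +\infty$ while the second term stays non-negative, so that the sum is $+\infty$, which by Definition \ref{def:lrd} makes $X$ l.r.d.

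First I would observe that since $Z \in \pa$ and $A$ is a positive constant, $X = AZ$ is also \pa, hence property \eqref{eq:property} holds for $X$; in particular $\cov_X(t,u,v) \ge 0$ for all $t,u,v$, and likewise $\cov_Z(t, u/A, v/A) \ge 0$. Therefore the second term on the right-hand side of \eqref{eq:covRandVol} is non-negative, and it suffices to arrange $D_\mu(A, Z_0) = +\infty$. Now
$$
D_\mu(A, Z_0) = \int_T \int_{\R^2} \cov\big(\bar F_Z(u/A), \bar F_Z(v/A)\big)\,\mu(du)\,\mu(dv)\,dt.
$$
The covariance here does not depend on $t$, so the spatial integral is just multiplication by $\nu_d(T)$ (or $|T|$ in the discrete case), which is $+\infty$ by assumption. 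Hence $D_\mu(A,Z_0) = +\infty$ as soon as
$$
\int_{\R^2} \cov\big(\bar F_Z(u/A), \bar F_Z(v/A)\big)\,\mu(du)\,\mu(dv) > 0.
$$
By Fubini and the fact that the integrand is $\E[\,\cdot\,]$ of a square (namely $\cov(\bar F_Z(u/A), \bar F_Z(v/A))$ integrated against $\mu \times \mu$ is $\E\big[(\int_{\R}\bar F_Z(u/A)\mu(du) - \E\int_{\R}\bar F_Z(u/A)\mu(du))^2\big] = \var\big(\int_{\R} \bar F_Z(u/A)\,\mu(du)\big)$), this quantity equals $\var\big(\psi(A)\big)$ where $\psi(a) := \int_{\R}\bar F_Z(u/a)\,\mu(du)$. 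It is strictly positive unless $\psi(A)$ is a.s.\ constant.

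The main obstacle — and the place where the hypothesis $\bar F_Z(u_0/A) \ne \mathrm{const}$ a.s.\ is used — is to choose $\mu$ so that $\psi(A) = \int_{\R}\bar F_Z(u/A)\,\mu(du)$ is non-degenerate. Taking $\mu = \delta_{\{u_0\}}$ gives $\psi(A) = \bar F_Z(u_0/A)$, which is non-constant a.s.\ precisely by hypothesis, so $\var(\bar F_Z(u_0/A)) > 0$; one should check the mild integrability needed ($\bar F_Z \le 1$ so everything is bounded, and $\mu$ is finite, so there is no issue). Then $D_\mu(A,Z_0) = \nu_d(T)\cdot \var(\bar F_Z(u_0/A)) = +\infty$, and combined with the non-negativity of the remaining term this yields $\sigma^2_{\mu,X} = +\infty$ for $\mu = \delta_{\{u_0\}}$, establishing l.r.d. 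The only subtlety worth a sentence is verifying that all the applications of Tonelli/Fubini are legitimate: the integrands in the first term are signs-definite (non-negative, being a variance-type expression) once we work with $\mu\times\mu$, and $\cov_Z$ is non-negative, so no cancellation issues arise.
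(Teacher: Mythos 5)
Your proposal is correct and follows essentially the same route as the paper: choose $\mu=\delta_{\{u_0\}}$, apply Lemma \ref{thm:lrd_randvol}, observe that the second term is non-negative because $Z\in\pa$, and note that the first term reduces to $\int_T \var\bigl(\bar F_Z(u_0/A)\bigr)\,dt=+\infty$ since the variance is strictly positive by the non-degeneracy hypothesis and $|T|=+\infty$. The paper's proof is exactly this argument in compressed form.
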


The above corollary evidently holds true if e.g.
 $Z_0\sim {\mbox Exp}(\lambda)$, $A\sim \mbox{Frechet}(1)$ for any $\lambda>0$.
It also clearly applies to a subgaussian random field  $X$ where $A=\sqrt{B}$, $B\sim S_{\alpha/2} \left( \left( \cos \frac{\pi\alpha}{4}\right)^{2/\alpha},1,0  \right) $, $\alpha \in (0,2)$, and  $Z$ is a centered  stationary Gaussian random field with covariance function $C(t)\ge 0$ for all $t\in T$ and a non--degenerate tail $\bar{F}_Z$.

The following corollary describes the situation where
light-tailed $Y$ is responsible for the l.r.d. of $X$, while  $Z$ -- for heavy tails.
\begin{corollary}\label{cor:rvt0}For the random field $X=\{ X_t, t\in T\}$  given by
$X_t=Y_t Z_t$, $t\in T$, assume that random fields $Y=\{ Y_t, t\in T\}$ and $Z=\{ Z_t, t\in T\}$ are stationary and independent.
Assume that $Z_0$ has a regularly varying tail, that is,
 $P(Z_0>x)\sim L(x)/x^\alpha$ as $x\to +\infty$ for some $\alpha>0$  where the function $L$ is slowly varying at $+\infty$.
For $Y_0>0$ a.s. assume that $\E Y_0^{\delta}<\infty$ and $\E \left( Y_0^{\delta}Y_t^{\delta} \right)<\infty$ for some $\delta>\alpha$ and all $t\in T$. Let $Y, Z\in$ \pa(\na). Then $X$ is l.r.d. if  $Y^\alpha=\{Y_t^\alpha, \,t\in T\}$ is l.r.d.
\end{corollary}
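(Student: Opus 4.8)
The plan is to combine Lemma~\ref{thm:lrd_randvol} with the regular variation of $\bar F_Z$. Since $G=\mathrm{id}$, $Y_t>0$ a.s., and $Z\in\pa(\na)$ has property~\eqref{eq:property}, Lemma~\ref{thm:lrd_randvol} applies. I would first record a sign observation: for $u,v>0$ the summand $\int_T\iint\E[\cov_Z(t,u/Y_0,v/Y_t)]\,\mu(du)\mu(dv)\,dt$ in~\eqref{eq:covRandVol} is $\ge0$ when $Z\in\pa$ (and $\le0$ when $Z\in\na$), since $\cov_Z(t,\cdot,\cdot)$ has that sign; and since $\bar F_Z(u/y)$ is non-decreasing in $y$ for $u>0$, the quantity $\cov(\bar F_Z(u/Y_0),\bar F_Z(v/Y_t))$ is a covariance of coordinatewise non-decreasing functions of $(Y_0,Y_t)$ and so carries the sign dictated by the association type of $Y$. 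As $Y$ and $Z$ are of the \emph{same} type, both summands in~\eqref{eq:covRandVol} have the same sign for $u,v>0$; taking $\mu$ supported on $(0,\infty)$, the absolute values add and
\[\sigma^2_{\mu,X}\ \ge\ |D_\mu(Y,Z_0)|\ =\ \int_T\int_{(0,\infty)^2}\big|\cov\big(\bar F_Z(u/Y_0),\bar F_Z(v/Y_t)\big)\big|\,\mu(du)\,\mu(dv)\,dt.\]
So it suffices to exhibit one finite $\mu$ on $(0,\infty)$ making this integral $+\infty$.

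Next I would rewrite $|D_\mu|$ in terms of excursion covariances of $Y^\alpha$. For $u,y>0$ one has $\bar F_Z(u/y)=P\big((u/Z_0)^\alpha<y^\alpha,\, Z_0>0\big)=\int_{(0,\infty)}\ind(y^\alpha>a)\,\tilde\lambda^{(u)}(da)$, where $\tilde\lambda^{(u)}$ is the law of $(u/Z_0)^\alpha$ restricted to $\{Z_0>0\}$. Substituting and using Fubini, $\cov(\bar F_Z(u/Y_0),\bar F_Z(v/Y_t))=\iint\cov_{Y^\alpha}(t,a,b)\,\tilde\lambda^{(u)}(da)\,\tilde\lambda^{(v)}(db)$; integrating in $u,v$ against $\mu$ and invoking Tonelli (all integrands have one sign, as $Y^\alpha$ inherits the association type of $Y$, hence property~\eqref{eq:property}) gives the identity $|D_\mu(Y,Z_0)|=\sigma^2_{\tilde\Lambda_\mu,\,Y^\alpha}$, where $\tilde\Lambda_\mu$ is the (finite) law of $(U/Z_0)^\alpha$ on $\{Z_0>0\}$ with $U\sim\mu$ independent of $Z_0$. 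Thus $\sigma^2_{\mu,X}\ge\sigma^2_{\tilde\Lambda_\mu,Y^\alpha}$ for every finite $\mu$ on $(0,\infty)$. The moment hypotheses enter one level below this: together with Potter's bounds for $\bar F_Z$ and Jensen's inequality $\E(Y_0^{\alpha+\varepsilon}Y_t^{\alpha+\varepsilon})\le(\E(Y_0^\delta Y_t^\delta))^{(\alpha+\varepsilon)/\delta}$ (for small $\varepsilon>0$ with $\alpha+\varepsilon<\delta$), they show $\tfrac{u^\alpha v^\alpha}{L(u)L(v)}\cov(\bar F_Z(u/Y_0),\bar F_Z(v/Y_t))\to\cov(Y_0^\alpha,Y_t^\alpha)$ as $u,v\to\infty$ with all quantities finite --- this is the conceptual reason the l.r.d. of $X$ is inherited from that of $Y^\alpha$, and it is consistent with the displayed identity via Hoeffding's relation~\eqref{eq:Hoeffding} applied to $Y^\alpha$ (valid since $\E Y_0^\alpha<\infty$ and $\E(Y_0^\alpha Y_t^\alpha)<\infty$).

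It remains to choose $\mu$. Since $Y^\alpha$ is l.r.d., fix a finite measure $\nu$, which may be taken on $(0,\infty)$ (levels $\le0$ do not contribute to $\cov_{Y^\alpha}$), with $\sigma^2_{\nu,Y^\alpha}=+\infty$. The map $\rho\mapsto\sigma^2_{\rho,Y^\alpha}$ is the energy of $\rho$ for the non-negative symmetric kernel $K(a,b)=\int_T|\cov_{Y^\alpha}(t,a,b)|\,dt$ (the energy interpretation from Section~\ref{subsec:Motivation}), hence monotone under domination of measures; so it is enough to choose a finite $\mu$ with $\tilde\Lambda_\mu\ge c\,\nu$ for some $c>0$. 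Since $\bar F_Z$ is regularly varying, $(u/Z_0)^\alpha=u^\alpha Z_0^{-\alpha}$ and the law of $Z_0^{-\alpha}\ind(Z_0>0)$ is near $0$ absolutely continuous with slowly varying density, so for $\mu$ supported far out $\tilde\Lambda_\mu$ has a controlled density on each bounded interval; one then selects $\mu$ from $\nu$ accordingly. \emph{This last step is the main obstacle:} transporting a merely finitely-additive l.r.d. witness $\nu$ for $Y^\alpha$ across the multiplicative smearing by $Z_0$ encoded in $\tilde\Lambda_\mu$. The regular variation pins down the shape of $\tilde\Lambda_\mu$ at $0$ and $\infty$, which suffices for a diffuse $\nu$ (or one with bounded density on bounded sets); the atomic part of $\nu$ is handled separately --- if $K$ is already infinite at a single level, or $Z_0$ has an atom there, the point mass $\mu=\delta_{u_0}$ works, and otherwise one reduces to the diffuse case. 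The remaining verifications --- measurability, the Fubini/Tonelli interchanges, and elementary estimates on $\bar F_Z$ --- are routine.
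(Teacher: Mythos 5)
Your opening reduction coincides with the paper's: with $Y,Z$ of the same association type both summands in \eqref{eq:covRandVol} carry one sign, so $\sigma^2_{\mu,X}\ge|D_\mu(Y,Z_0)|$ for $\mu$ supported on $(0,\infty)$, and it suffices to make $|D_\mu|$ diverge. From there, however, you route the argument through the identity $|D_\mu(Y,Z_0)|=\sigma^2_{\tilde\Lambda_\mu,\,Y^\alpha}$ and then try to choose $\mu$ so that the pushforward $\tilde\Lambda_\mu$ (the law of $(U/Z_0)^\alpha$) dominates a constant multiple of a witness measure $\nu$ for the l.r.d.\ of $Y^\alpha$. This is the step you yourself flag as ``the main obstacle'', and it is a genuine gap rather than a routine verification. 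If $Z_0$ has a continuous distribution then, by the independence of $U$ and $Z_0$, the measure $\tilde\Lambda_\mu$ is atomless for every finite $\mu$, so it can never dominate $c\,\delta_{a_0}$; and the fallback ``reduce to the diffuse case'' when $K(a_0,a_0)=\int_T|\cov_{Y^\alpha}(t,a_0,a_0)|\,dt=+\infty$ is unsupported, because $K$ need not be infinite, or even large, at nearby off-diagonal levels: $\cov_{Y^\alpha}(t,\cdot,\cdot)$ is neither monotone in the levels nor, without extra assumptions on the law of $Y_0^\alpha$, continuous in them. Likewise, regular variation of $\bar F_Z$ does not give $Z_0$ a density, let alone one bounded below on compacts, so the claimed ``controlled density'' of $\tilde\Lambda_\mu$ is not available in general. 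As written, the argument does not close.

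The repair is to promote to centre stage the asymptotic you relegate to a side remark. By regular variation, Potter's bounds and the moment hypotheses ($\E Y_0^\delta<\infty$, $\E(Y_0^\delta Y_t^\delta)<\infty$ with $\delta>\alpha$, which furnish the domination needed to pass to the limit), one shows
$\cov\big(\bar F_Z(u/Y_0),\bar F_Z(v/Y_t)\big)\sim\bar F_Z(u)\bar F_Z(v)\,\cov(Y_0^\alpha,Y_t^\alpha)$ as $u,v\to+\infty$. This is exactly the paper's route: it then takes the single Dirac mass $\mu=\delta_{\{u_0\}}$ with $u_0$ large and bounds
\begin{equation*}
\sigma^2_{\mu,X}\ \ge\ \int_T A_{u_0,u_0}(t)\,dt\ \ge\ \varepsilon\,\bar F_Z^{\,2}(u_0)\int_T\cov\left(Y_0^\alpha,Y_t^\alpha\right)dt\ =\ +\infty,
\end{equation*}
the final divergence being the content of ``$Y^\alpha$ is l.r.d.'' (via \eqref{eq:Hoeffding}, legitimate since $Y^\alpha\in\pa$ and $\E(Y_0^\alpha Y_t^\alpha)<\infty$ by Jensen). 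This completely avoids transporting the witness measure through the law of $Z_0$, which is precisely where your version breaks down; the only point needing care is a lower bound for the ratio $A_{u,u}(t)/\big(\bar F_Z^{\,2}(u)\cov(Y_0^\alpha,Y_t^\alpha)\big)$ that is uniform in $t$, and Potter's bounds provide it.
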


Now we scale a l.r.d. (possibly heavy--tailed)  random field $Z$ by a random volatility $G(Y)$ being a subordinated Gaussian random field.
\begin{lemma}\label{lemm:DSubGauss}
Let $X_t=G(Y_t) Z_t$ be a random field as in Lemma \ref{thm:lrd_randvol}. Assume additionally that $Y$ is a centered Gaussian random field with unit variance and covariance function $\rho(t)\ge 0$ satisfying condition {\bf ($\rho$)}.
Then
\begin{equation*}
D_{\mu}\left(G(Y),Z_0\right)= \sum\limits_{k=1}^\infty \frac{ \left( \int_{\R} \langle \bar{F}_Z (u/G(\cdot)),  H_k(\cdot) \rangle_\varphi \, \mu(du)  \right)^2 }{k!}\int_T \rho^k(t) \, dt .
\end{equation*}
\end{lemma}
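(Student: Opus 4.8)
The plan is to compute, for fixed $t\in T$ and $u,v\in\R$, the covariance $\cov\big(\bar{F}_Z(u/G(Y_0)),\bar{F}_Z(v/G(Y_t))\big)$ through the Hermite expansion of Lemma~\ref{lem:CovGauss}, and then to integrate the resulting series successively against $\mu(\ud u)\,\mu(\ud v)$ and over $t\in T$, interchanging summation and integration at each stage. Put $F_u(\cdot)=\bar{F}_Z\big(u/G(\cdot)\big)$, which is Borel measurable; since $0\le F_u\le1$ and $G(Y_0)=0$ only on a $\varphi$--null set (recall $P(G(Y_t)=0)=0$), the function $F_u$ lies in $L^2_\varphi(\R)$ with $\|F_u\|_{L^2_\varphi}\le1$, and likewise $F_v$. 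The vector $(Y_0,Y_t)$ is jointly Gaussian with standard marginals and correlation $\rho(t)$, which by the assumption $\rho(t)\ge0$ lies in $[0,1]$; hence Lemma~\ref{lem:CovGauss}, applied with $F_u$ and $F_v$ in the roles of $F$ and $G$, gives
\[
\cov\big(F_u(Y_0),F_v(Y_t)\big)=\sum_{k=1}^{\infty}\frac{\langle F_u,H_k\rangle_\varphi\,\langle F_v,H_k\rangle_\varphi}{k!}\,\rho(t)^k
\]
(the formula remaining valid when $\rho(t)=1$, where it reduces to Parseval's identity in $L^2_\varphi(\R)$).

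Next I would integrate this identity against the finite product measure $\mu\times\mu$. To interchange $\sum_k$ with $\int_{\R^2}$, and to do so uniformly in $t$, I estimate, using the Cauchy--Schwarz inequality in the variable $u$ with respect to $\mu$, then $\rho(t)^k\le1$, Tonelli's theorem, and Bessel's inequality in $L^2_\varphi(\R)$,
\[
\sum_{k=1}^{\infty}\frac{\rho(t)^k}{k!}\Big(\int_{\R}\big|\langle F_u,H_k\rangle_\varphi\big|\,\mu(\ud u)\Big)^2\le\mu(\R)\int_{\R}\sum_{k=1}^{\infty}\frac{\langle F_u,H_k\rangle_\varphi^2}{k!}\,\mu(\ud u)\le\mu(\R)\int_{\R}\|F_u\|_{L^2_\varphi}^2\,\mu(\ud u)\le\mu(\R)^2<\infty .
\]
Therefore Fubini's theorem applies, and since over $\mu\times\mu$ the kernel $\langle F_u,H_k\rangle_\varphi\langle F_v,H_k\rangle_\varphi$ factorises, I get for every $t\in T$
\[
\int_{\R^2}\cov\big(F_u(Y_0),F_v(Y_t)\big)\,\mu(\ud u)\,\mu(\ud v)=\sum_{k=1}^{\infty}\frac{c_k}{k!}\,\rho(t)^k,\qquad c_k:=\Big(\int_{\R}\langle\bar{F}_Z(u/G(\cdot)),H_k(\cdot)\rangle_\varphi\,\mu(\ud u)\Big)^2\ge0,
\]
the coefficients $c_k$ being independent of $t$.

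Finally I would integrate over $t\in T$ (reading $\int_T\,\ud t$ as $\sum_{t\in T}$ in the discrete case). Because the last series has nonnegative terms and $\rho(t)\ge0$, Tonelli's theorem permits swapping $\int_T$ with $\sum_k$, whence
\[
D_\mu\big(G(Y),Z_0\big)=\int_T\sum_{k=1}^{\infty}\frac{c_k}{k!}\,\rho(t)^k\,\ud t=\sum_{k=1}^{\infty}\frac{c_k}{k!}\int_T\rho^k(t)\,\ud t ,
\]
which is the claimed identity, both sides possibly equal to $+\infty$. The joint measurability of $(u,y)\mapsto F_u(y)H_k(y)\varphi(y)$ and of $t\mapsto\cov(F_u(Y_0),F_v(Y_t))$ needed for the Fubini/Tonelli steps is routine. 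The one genuinely delicate point is making the interchange of summation with the $\mu\times\mu$--integration uniform in $t$: the crude bound $|\langle F_u,H_k\rangle_\varphi|\le\sqrt{k!}$ alone only yields the geometric series $\mu(\R)^2\sum_k\rho(t)^k$, which diverges where $\rho(t)=1$ (for instance at $t=0$ when $0\in T$), so the Bessel-type estimate above, which stays bounded at $\rho(t)=1$, is what makes the argument go through.
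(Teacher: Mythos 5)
Your proof is correct and follows essentially the same route as the paper's: apply Lemma \ref{lem:CovGauss} to $G_u(\cdot)=\bar{F}_Z(u/G(\cdot))$ and then interchange the Hermite sum with the $\mu\times\mu$-- and $t$--integrations. The only difference is in justifying that interchange: the paper uses the crude bound $|\langle G_u,H_k\rangle_\varphi|\le\sqrt{k!}$ together with condition {\bf ($\rho$)} to get a convergent geometric series for almost every $t$, whereas your Bessel-type estimate is uniform in $t$ and remains valid even where $\rho(t)=1$ --- a slightly more robust version of the same step.
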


The following example illustrates our definition of l.r.d. in the context of a popular {\it long memory stochastic volatility model} that is used in econometrics to model log--returns of stocks, see \cite[p.70ff]{beran:kulik:2013} and references therein.


\begin{example}\label{ex:RV1}
Assume that $X=\{X_t,t\in\Z\}$ has a form $X_t=e^{Y_t^2 /4}Z_t$, where $Z_t$ is a sequence of i.i.d. random variables {with finite moment of order $2+\delta$ for some $\delta>0$}, while $Y_t$ is a centered stationary Gaussian \pa \ long memory sequence with unit variance and covariance function $C_Y$ satisfying condition ($\rho$). Both sequences $Z_t$ and $Y_t$ are assumed to be independent from each other. From Example \ref{ex:e_x2} we know that $e^{Y_0^2 /4}$ is regularly varying with index $\alpha=2$. By Breiman's lemma  the tail distribution function of $|X_0|$ is also regularly varying with index $\alpha =2$ and hence $X_0$ has infinite variance. Choose
$\mu=\delta_{\{  u_0\}} $ for some $u_0\in\R$. Lemmas \ref{thm:lrd_randvol} and \ref{lemm:DSubGauss} yield
\begin{equation}\label{eq:C_Xiid}
\sum\limits_{t=1}^{\infty} \int\limits_{\R^2} {\cov}_X(t,u,v) \mu(du)\mu(dv) = \sum\limits_{k=1}^\infty \frac{ \langle \bar{F}_Z(u_0/G),  H_k\rangle_\varphi^2 }{k!} \sum_{t=1}^{\infty} C_Y^k(t)  ,
\end{equation}
where $G(x)=e^{x^2/4}$. Since $\bar{F}_Z(u_0/G)$ is symmetric, monotone nondecreasing and bounded we get $\langle \bar{F}_Z(u_0/G),  H_{k}\rangle_\varphi=0$ for all odd $k$, and it is finite for all even $k\in\N$. Moreover, by Lemma \ref{lemm:HermRank}, 2) we have
$\rg(\bar{F}_Z(u_0/G) )=2$. It is clear then that $X$ is l.r.d.  if  $\sum_{t=1}^\infty\rho^2(t)=+\infty$. In particular, if $C_Y(t)\sim |t|^{-\eta}$ as $|t|\to\infty$, then l.r.d. occurs if $\eta\in (0,1/2]$.
{Again, similarly to Example \ref{ex:e_x2}, the point here is that we obtain long memory in case of both finite and infinite variance.}
\end{example}


\section{Limit theorems}
\label{sec:lt}

In this section, we investigate connections between Definition \ref{def:lrd} and limit theorems for random volatility and subordinated Gaussian random fields. In order to do so, we have to specify the statistic  whose limiting behaviour we consider.
We focus on the volume of the excursion sets.

In Section \ref{sec:subordinated}
we consider subordinated Gaussian random fields. In Section \ref{subsect:LTInfVar}
we show by a natural example that our definition of long memory is in agreement with the existing limiting behaviour of the volume of excursions of $X$ over some levels $u$.
On the other hand, in Section
\ref{sec:mean-infinite-variance},
we will indicate that the limiting behaviour of the empirical mean cannot be directly related to our definition. The latter is not surprising.

In Section \ref{subsec:LTInt} we consider related problems for stochastic volatility random fields.

From now on, we assume the random field $X$ to be measurable. In what follows, $L$ will indicate a slowly varying function at infinity, that can be different at each of its occurrences.

We start with the following lemma that will play a major role.
\begin{lemma}\label{lemm:HermRank}
Let $Y,$ $Z$ be independent random variables such that  $Y\sim N(0,1)$.   For any monotone right-continuous non--constant function  $G:\R\to\R_\pm$ with $\nu_1\left(\{x\in\R:  G(x)=0\} \right)=0 $, consider the functions $\widetilde{G}(y)=G(|y|)$ and
\begin{equation}\label{eq:psi}
\zeta_{G,Z,u}(y)=    \E[\ind \{G(y)Z>u\} ]-P \left( G(Y)Z>u\right), \quad y\in \R
\end{equation}
for a fixed $u> 0$ if $G\ge 0$ and $u<0$ if $G\le 0$. Then the following holds:
\begin{enumerate}
\item[{\rm (i)}] Let $G:\R\to\R_\pm$ be as above such that $\E |G(Y)|^{1+\theta}<+\infty $ for some $\theta\in(0,1]$. Then
$  \rg (G)= \rg(  \zeta_{G,1,u}) = \rg(  \zeta_{G,Z,u})=1.$
\item[{\rm (ii)}] Let $G:\R_+\to\R_\pm$ be as above such that $\E |\widetilde{G}(Y)|^{1+\theta}<+\infty $ for some $\theta\in(0,1]$, $G^-(u)\neq 0$, where  $G^{-}$ is the generalized inverse of $G$. Then
$  \rg (\widetilde{G})=\rg( {\zeta}_{\widetilde{G},1,u})= \rg( {\zeta}_{\widetilde{G},Z,u})=2.$
\end{enumerate}
\end{lemma}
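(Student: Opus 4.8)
The plan is to compute the Hermite coefficients $\langle \zeta_{G,Z,u}, H_k\rangle_\varphi$ and relate them to those of $G$ itself. First I would observe that $\zeta_{G,Z,u}$ is, up to an additive constant (which kills the $k=0$ coefficient and is irrelevant for the Hermite rank), the function $y\mapsto \E[\ind\{G(y)Z>u\}] = P(Z > u/G(y))$ when $G\ge 0$ (and the mirror statement when $G\le 0$), where the expectation is over the law of $Z$ only since $Z$ is independent of $Y$. The key structural fact is that when $G$ is monotone non-decreasing (the non-increasing case being symmetric), the map $y \mapsto \E[\ind\{G(y)Z>u\}]$ is also monotone: for $Z$-a.e. value, $\ind\{G(y)Z>u\}$ is non-decreasing in $y$ once we fix the sign, so its expectation inherits monotonicity. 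A non-constant monotone function in $L^2_\varphi(\R)$ has Hermite rank exactly $1$, because $\langle f, H_1\rangle_\varphi = \langle f, \mathrm{id}\rangle_\varphi = \cov(f(Y),Y) \ne 0$ for any non-constant monotone $f$ (strict monotonicity of the covariance under a monotone non-constant transformation of a Gaussian — this is exactly the $k=1$ term in Lemma \ref{lem:CovGauss} applied with $F=f$, $G=\mathrm{id}$, or more elementarily an FKG/association argument). This handles part 1 for $\zeta_{G,1,u}$ and for $\rg(G)$ simultaneously; the passage to general $Z$ adds nothing new since $P(Z>u/G(y))$ is still monotone in $y$.

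For part 2, the function $\widetilde G(y) = G(|y|)$ and likewise $\zeta_{\widetilde G, Z, u}(y)$ are even functions of $y$: replacing $y$ by $-y$ leaves $|y|$ unchanged. An even function in $L^2_\varphi(\R)$ is orthogonal to every odd Hermite polynomial, in particular to $H_1$, so its Hermite rank is at least $2$ (and automatically even). To show the rank is exactly $2$, I would show $\langle \zeta_{\widetilde G, Z, u}, H_2\rangle_\varphi \ne 0$, i.e. $\cov(\zeta_{\widetilde G,Z,u}(Y), Y^2) \ne 0$ (using $H_2(y)=y^2-1$). Restricting to $y\ge 0$ and using evenness, this reduces to a statement about $P(Z > u/G(y))$ as a function of $y\in[0,\infty)$: it is non-constant and monotone there (by the same argument as in part 1, together with the hypothesis $G^-(u)\ne 0$ which guarantees non-constancy survives the composition), and $y^2$ conditioned on $|y|$ ranging over $[0,\infty)$ is a monotone non-constant function of $|y|$; so the covariance of two non-constant monotone functions of the same variable $|Y|$ is strictly positive by association (equivalently, by Chebyshev's correlation inequality / the $k=1$ term of the Hermite expansion applied to the law of $|Y|$, or to a change of variable making $|Y|^2$ Gaussian-like). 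The condition $\E|\widetilde G(Y)|^{1+\theta}<\infty$ is needed precisely to place $\widetilde G$, and hence the bounded function $\zeta_{\widetilde G,Z,u}$, in a space where the Hermite rank is well-defined in the generalized sense of \cite{sly:heyde:2008}.

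The main obstacle I anticipate is making the non-vanishing of the relevant low-order coefficient fully rigorous when $G$ is merely monotone, right-continuous, and possibly with $\mathrm{Im}(G)$ a proper subset of $\R$ or touching zero: one must check that the composition $y\mapsto P(Z>u/G(y))$ does not degenerate to a constant (this is where $G^-(u)\ne 0$ and non-constancy of $G$ enter, ruling out the case where the level $u/G(y)$ never crosses the support of $Z$), and that the generalized Hermite-rank machinery of \cite{beran:kulik:2013} applies to a bounded function so that $\langle \zeta, H_k\rangle_\varphi$ is finite for all $k$. The monotonicity-implies-positive-correlation step is standard but should be cited (association of Gaussian vectors, or Lemma \ref{lem:CovGauss} with a truncation/approximation argument to handle the infinite-variance $G$ via the bounded surrogate $\zeta$, which has finite moments of all orders).
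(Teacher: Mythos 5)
Your proposal follows essentially the same route as the paper's proof: both reduce $\rg (G)=\rg(\zeta_{G,Z,u})=1$ (resp.\ $=2$) to the non-vanishing of $\langle\cdot,H_1\rangle_\varphi$ (resp.\ $\langle\cdot,H_2\rangle_\varphi$), using that $y\mapsto \bar F_Z(u/G(y))$ inherits monotonicity from $G$ (resp.\ is even and monotone in $|y|$) and that a non-constant monotone function of a Gaussian has strictly positive covariance with $Y$ (resp.\ with $Y^2$ on $|Y|$). The only substantive difference is in part 2, where the paper proves $\E[(Y^2-1)h(|Y|)]>0$ for monotone non-constant $h$ by an explicit mean-value-theorem computation based on $\int_0^\infty(y^2-1)\varphi(y)\,dy=0$, while you invoke Chebyshev's correlation inequality for monotone functions of $|Y|$ --- the two are equivalent, and the degeneracy issue you flag (that $y\mapsto P(Z>u/G(y))$ could collapse to a constant for pathological $Z$ or for $u$ outside the range of $G$) is a genuine caveat that the paper's proof also passes over silently.
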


\begin{remark}\label{rem:G}
\begin{enumerate}
\item[{\rm (i)}] If $Z\equiv 1$ the assertion of Lemma \ref{lemm:HermRank}(i) holds under milder assumptions on $G$ and $u$. Thus,  let $G:\R\to\R$ be a monotone right--continuous non--constant function such that $\E |G(Y)|^{1+\theta}<+\infty $ for some $\theta\in(0,1]$. Then for any $u\in\R$  $\rg ( G) =\rg( \zeta_{G,1,u})=1$.
\item[{\rm (ii)}] The assumption of nonnegative or nonpositive $G$ is essential to the statement $  \rg(  \zeta_{G,Z,u})=1$
of Lemma \ref{lemm:HermRank}(i) since for $G(y)=y$ and symmetric  $Z$ we have
$
\E[Y \ind \{  YZ >u \}] =0,
$
so the Hermite rank of $\zeta_{G,Z,u} $  is greater than 1. Similarly, one can construct examples of functions $G$ with
$\rg(  \zeta_{\widetilde{G},Z,u})>2$ for some $u\in\R$  if the assumptions of Lemma \ref{lemm:HermRank}(ii) do not hold. For instance, $G^-(u)=0$ means that $\rg(  \zeta_{\widetilde{G},Z,u})\ge 4$.
\item[{\rm (iii)}] If $G$ is nonnegative or nonpositive and $u=0$ then it is easily seen that $\zeta_{G,Z,0}\equiv 0$ and, formally speaking, its Hermite rank is infinite.
\end{enumerate}
\end{remark}

\subsection{Limit theorems for subordinated Gaussian processes}\label{sec:subordinated}
Let $X=\{  X_t, t\in\R^d \}$ where $X_t= G(Y_t)$ and $Y=\{ Y_t, t\in \R^d \}$ is a stationary isotropic l.r.d. centered Gaussian random field with covariance function
$C_Y(t)= \|t\|^{-\eta}L(\|t\|),$
$\eta\in (0,d/q)$ (cf. \cite{IvLeon89,Leonenko99,LeonOlenko14}). 
Here $\E G^2(Y_0)<+\infty$ and $q$ is the Hermite rank of $G$.  Under some technical assumptions on
the spectral density $f(\lambda)$  of $Y$ (cf. \cite[Assumption 2]{LeonOlenko14})
it holds
\begin{align}\label{eq:non-clt:subGauss}
n^{q\eta/2-d}L^{-q/2}(n)
\int_{W_n}G(Y_t)\, dt \tod R\;, \quad n\to+\infty,
\end{align}
 where
 \begin{equation}\label{eq:R}
 R=\left( \gamma(d,\eta) \right)^{q/2}\int^\prime_{\R^{dq}}  \int_W e^{i\langle \lambda_1+\ldots+\lambda_q,u\rangle} du \frac{\tilde{B}(d\lambda_1) \ldots \tilde{B} (d\lambda_q)}{\left( \| \lambda_1\| \cdot \ldots \cdot  \| \lambda_q\|   \right)^{(d-\eta)/2}},
\end{equation}
$$
\gamma(d,\eta) =\frac{\Gamma\left( (d-\eta)/2 \right)  }{2^\eta \pi^{d/2} \Gamma(\eta/2)},
$$
and $\int^\prime_{\R^{dq}}  $ is the multiple Wiener--Ito integral with respect to a complex Gaussian white noise measure $\tilde{B}$ (with structural measure being the spectral measure of $Y$, cf. \cite[Section 2.9]{IvLeon89}). It is easy to see that in case $q=1$ the distribution of $R$ is Gaussian. However, the normalization $n^{\eta/2-d} L^{-1/2}(n)$ differs from the CLT--common normalizing factor $n^{-d/2}$  which agrees with the fact that $X$ is l.r.d. in the sense of the usual definition as in \eqref{lrdfv}. For $q\ge 2$, one gets a $q$--Rosenblatt--type distribution for $R$, see \cite{VeilletteTaqqu13,LeonRuizTaqqu17} and references therein for its properties in the case $q=2$.

\subsubsection{Volume of level sets} \label{subsect:LTInfVar}

We specify the above situation to the level sets.
Assume $G:\R\to\R$ to be a monotone right--continuous function such that $\E |G(Y)|^{1+\theta}<+\infty $ with $\theta\in(0,1)$. Let the variance of $X_0$ be infinite. For any $u\in\R$ introduce the function $g_u(x)=\zeta_{G,1,u} (x)$, where  $\zeta_{G,1,u} $ is given in \eqref{eq:psi}. By Remark \ref{rem:G}(i), the Hermite ranks of $G$ and $g_u$ are equal to one. If $\eta\in (0,d)$ then
\begin{align*}
\frac{
\int_{W_n}g_u(Y_t)\, dt}{n^{d-\eta/2}L^{1/2}(n)} = \frac{  \int_{W_n} \ind \left (G(Y_t)>u\right)\, dt    - \nu_d(W_n) P\left(G(Y_0)>u\right)      }{n^{d-\eta/2}L^{1/2}(n)}
 \tod R
\end{align*}
as $n\to+\infty$ where $R$ is given in \eqref{eq:R}.
The normalization in this limit theorem is not of CLT-type $n^{-d/2}$ which should be attributed to the l.r.d. case. Let us compare this behavior with Definition \ref{def:lrd}.
As an example, we consider
$$
G(x)=\mbox {sgn} (x) \left( e^{x^2/\beta^2}-1\right), \quad x\in\R
$$
for some $\beta>\sqrt{2(1+\theta)}$.  Note that it is possible that the variance of $X=G(Y)$ is infinite. Set $\mu=\delta_{\{ 0 \}  }$. By Remark \ref{rem:subGauss_LRD}, 1) we get $b_k(\mu)=H_k^2(0)/(2\pi)<+\infty$ for any $k\ge 0$,  $b_0>0$, $b_1= 0$, etc. By the choice $ C_Y(t)= \|t\|^{-\eta}L(\|t\|),$
$\eta\in (0,d)$ we get that $\int_{\R^d} |C_Y(t)|\, dt = +\infty$, and the series \eqref{eq:condSRD_general} diverges. Then $X$ is l.r.d. in the sense of Definition \ref{def:lrd} for $\eta\in (0,d)$  which is in accordance with the above limit theorem.

\subsubsection{Empirical mean: infinite variance case}\label{sec:mean-infinite-variance}
In this section
we show that Definition \ref{def:lrd} cannot be linked the behavior of integrals or partial sums of the field $X$ if $X$ has infinite variance. For that, we use the framework of time series
$X=\{ X_t, \ t\in\Z \}$ where many more models have been widely explored, as compared to (continuous-time) random fields.


Consider (similarly as in Section \ref{subsec:SubGaussian}) a subordinated time series $X_t=G(|Y_t|)$, $t\in\Z$, where $\{Y_t, \: t\in\Z\}$ is a centered Gaussian long memory linear time series with nondecreasing covariance function $C_Y(t)=\cov(Y_0,Y_t)\sim |t|^{-\eta}L(t) $, $t\to+\infty$, $\eta\in(0,1)$, and such that
$P(|X_0|>x)\sim  x^{-\alpha}L(x)$, $\alpha\in (0,2)$. It is further assumed that $G$ has Hermite rank $q$.
By  Corollary \ref{cor:subGauss}(ii),  $X$ is short range dependent in the sense of Definition \ref{def:lrd} whenever for any finite measure $\mu$ on $\R$
\begin{equation}\label{eq:condSRD_Ex}
\sum\limits_{k=1}^{\infty}\frac{b_{2k-1}(\mu)}{(2k)!} \sum\limits_{t=1}^{\infty}  C_Y^{2k}(t) <+\infty.
\end{equation}
We note that
\begin{equation}\label{eq:zeta_rho}
 \sum\limits_{t=1}^{\infty}  C_Y^{2k}(t) \le c_0  \int\limits_{1}^{\infty}  \frac{L^{2k}(t)}{ t^{2k\eta}}\, dt
 \le   \int\limits_{1}^{\infty}  \frac{c_1 \, dt}{ t^{2k(\eta-\delta)}}, \quad k\in \N,
 \end{equation}
 where  $\delta>0$ is arbitrary and $c_0, c_1>0$ are some constants. The second inequality holds since $L(t)\le  c_2  t^\delta$  for $t\ge t_0$ where $t_0>0$ is large enough and $c_2=c_2(\delta,t_0)=(1+\delta) L(t_0)/t_0^\delta \le 1$ for large $t_0$, cf. \cite[Proposition 2.6]{Resnick07}.
 The right--hand side of \eqref{eq:zeta_rho} is finite and equal to $O(1/k)$ whenever  $\eta\in(1/2,1)$ since $\delta>0$ can be chosen arbitrarily small. 
 The series in \eqref{eq:zeta_rho} diverges if $\eta\in (0,1/2)$. If $\eta=1/2$ the summability of the series in \eqref{eq:zeta_rho} depends on the particular form of the slowly varying function $L$ and will not be discussed here.

Thus, for $\eta\in(1/2,1)$ $X$ is s.r.d. whenever
\begin{equation}\label{eq:cond_b_k}
\sum\limits_{k=1}^{\infty}\frac{b_{2k-1}(\mu)}{(2k)! k}   <+\infty
 \end{equation}
 for any finite measure $\mu$.

Now we have to consider a special example of function $G$ in order to get more explicit results for the s.r.d. case. As in Example \ref{ex:e_x2},  set $G(x)=e^{x^2/(2\alpha)}$, $\alpha\in(0,2]$.  
By relation \eqref{eq:bk_2k}, condition \eqref{eq:cond_b_k} is satisfied for $\eta\in(1/2,1)$, hence $X$ is s.r.d. in the sense of  Definition \ref{def:lrd}
if $\eta\in(1/2,1)$  and l.r.d. if  $\eta\in (0,1/2)$.

Let us compare this result with the limiting behaviour of the partial sums $S_n=\sum_{t=1}^n (X_t-\E[X_t])$ as given  in \cite{sly:heyde:2008} and \cite[Section 4.3.5]{beran:kulik:2013}, 
cf. Table 1.  
There, some discrepancies are seen, that is Definition  \ref{def:lrd} does not  agree with the asymptotic behaviour of $S_n$.
\begin{center}
\begin{table}\label{tab:alpha12}
\begin{tabular}{|c|c|}
\hline
Parameter range & Limit of normalized sums $S_n$ \\ \hline $1-{1}/{\alpha}<\eta<1$  & $\alpha$--stable \\  \hline  $0< \eta<1-{1}/{\alpha}$ & Rosenblatt \\
\hline
\end{tabular}
\caption{Short or long memory of $X_t=e^{Y_t^2/(2\alpha)}$ in the infinite variance case $\alpha\in(1,2)$ in dependence of the long memory parameter $\eta$ of $Y$ according to
paper  \cite{sly:heyde:2008}. }
\end{table}
\end{center}

\subsection{Limit theorems for the integrals of  functionals of l.r.d. random volatility fields}\label{subsec:LTInt}
In this section we will justify that our definition of l.r.d. is in agreement with limit theorems for volumes of level sets for random volatility models. Unlike as in the subordinated Gaussian case {where the limiting results are known}, a general asymptotic theory has to be developed.

Let $X$ be a random volatility field of the form $X_t=G(Y_t)Z_t$, $t\in \Z^d$, where
\begin{itemize}
\item $\{G(Y_t),t\in \R^d\}$ is a subordinated Gaussian measurable random field, which is sampled at points $t\in\Z^d$,
\item $\{Z_t,t\in \Z^d\}$ is a white noise,
\item the random fields $Y$ and $Z$ are independent.
\end{itemize}
Our goal is to prove limit theorems for
$
\sum_{t\in W_n}g(X_t)
$
as $n\to\infty$,
where $W_n=[-n,n]^d \cap \Z^d$ and $g$ is a real valued Borel--measurable function such that
\begin{align}\label{eq:clt-cond-4}
\E [g(X_0)]=0,\quad \E [g^2(X_0)]>0\, .
\end{align}

Introduce the function
$$\xi(y)= \E [g(G(y) Z_0)] \;.$$
It follows from \eqref{eq:clt-cond-4}  that for $\nu_1$--almost every $y\in \R$
\begin{align}\label{eq:clt-cond-1}\xi(y)<\infty\;.
\end{align}
By \eqref{eq:clt-cond-4} we also have $\E[\xi(Y_0)]=0$.
Let
$$
J(m)=\langle \xi, H_m\rangle_\varphi = \E[H_m(Y_0) \, g(G(Y_0)Z_0)]\;
$$
be the $m$th Hermite coefficient of $\xi$.
We recall that a sufficient condition for the finiteness of $J(m)$ is
\begin{align}\label{eq:clt-cond-3}
\E [|g(X_0) |^{1+\theta}]=  \E [|\xi(Y_0) |^{1+\theta}]
=\E\left[\left| \E[g(G(Y_0)Z_0)\mid {\cal Y}]\right |^{1+\theta}\right]
<\infty
\end{align}
for some $\theta\in(0,1]$, {where ${\cal Y}$ is a sigma-field generated by the entire sequence $Y$}. 
Let $\rg (\xi)= q$.
Furthermore, set
$$
m(y,Z_t)=g(G(y)Z_t) - \E[g(G(y)Z_t)]=g(G(y)Z_t) - \xi(y)\;,
$$
which is almost everywhere finite by \eqref{eq:clt-cond-1},
and
$
\chi(y)=\E[m^2(y,Z_0)]\;.
$
We also assume
\begin{align}\label{eq:clt-cond-2}
\E[\chi^3(Y_0)]<\infty\;.
\end{align}
Note that under \eqref{eq:clt-cond-2}, using Lyapunov inequality on a space of finite measure and the stationarity of $Y_t$, we have for any finite subset $I\subset \Z^d$ that
\begin{align*}
\E\left[\left(\sum_{t\in I}\chi(Y_t)\right)^3\right]<\infty\;.
\end{align*}
The following result shows that the limiting behaviour is primarily determined by the function $\xi$, with $\xi\equiv 0$ being the boundary case.
\begin{theorem}\label{thm:LTrvf}
Assume that random field $X_t=G(Y_t)Z_t$, $t\in\Z^d$, is  as  above,  where additionally
\begin{itemize}
\item $Y$ is a homogeneous isotropic centered Gaussian random field with the covariance function
$
C_Y(t)=\E [Y_0Y_t]= \|t\|^{-\eta}L(\|t\|)
$,
$\eta\in (0,d/q)$ and $L$ is slowly varying at infinity,
\item $Y$ has a spectral density $f(\lambda)$ which is continuous for all $\lambda\neq 0$ and decreasing in a neighborhood of $0$.
\end{itemize}
Assume that \eqref{eq:clt-cond-4}, \eqref{eq:clt-cond-3} with $\theta=1$, \eqref{eq:clt-cond-2} hold.
\begin{enumerate}
\item If $\xi(y)\equiv 0$ then
\begin{align}\label{eq:clt}
n^{-d/2}
\sum_{t\in W_n}g(X_t) \tod {\cal N}(0,\sigma^2)\;, \quad n\to+\infty,
\end{align}
where $\sigma^2=\E[g^2(X_0)] 2^d>0$.
\item If
$\xi(y)\not\equiv 0$ then
\begin{align}\label{eq:non-clt}
n^{q\eta/2-d}L^{-q/2}(n)
\sum_{t\in W_n} g(X_t) \tod R\;, \quad n\to+\infty,
\end{align}
 where the random variable $R$ is given in \eqref{eq:R} with $W=[-1,1]^d$.
\end{enumerate}
\end{theorem}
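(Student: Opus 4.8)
The plan is to decompose $g(X_t)$ into its conditional expectation given the whole Gaussian field $Y$ and the complementary fluctuation,
\[
g(X_t)=\xi(Y_t)+m(Y_t,Z_t),
\]
and to analyze the two integrals
\[
A_n=\int_{W_n}\xi(Y_t)\,dt,\qquad B_n=\int_{W_n}m(Y_t,Z_t)\,dt
\]
separately. Here $A_n$ is the integral of a subordinated isotropic Gaussian field whose subordinating function $\xi$ has Hermite rank $q=\rg(\xi)$, whereas, conditionally on $Y$, the field $\{m(Y_t,Z_t)\}$ is centered with zero range of dependence (since $Z$ is a white noise independent of $Y$ and $\E[m(y,Z_0)]=0$ for $\nu_1$-a.e.\ $y$) and conditional variance $\chi(Y_t)$. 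Because $\E[m(Y_0,Z_0)\mid Y]=0$ the summands $\xi(Y_0)$ and $m(Y_0,Z_0)$ are orthogonal in $L^2$, so that
\[
\E[g^2(X_0)]=\E[\xi^2(Y_0)]+\E[\chi(Y_0)];
\]
in particular $\E[\chi(Y_0)]=\E[g^2(X_0)]$ when $\xi\equiv0$.

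\emph{Step 1.} For $A_n$ I would invoke the reduction (non-central limit) theorem for integrals of subordinated isotropic Gaussian random fields, in the form recalled in Section~\ref{subsect:LTFinVar} (cf.\ \cite{DobrMajor79,Taqqu79,IvLeon89,LeonOlenko14}). Its hypotheses are in place: $\xi\in L^2_\varphi(\R)$ and $\E[\xi(Y_0)]=0$ by \eqref{eq:clt-cond-4} and \eqref{eq:clt-cond-3} (with $\theta=1$); $\rg(\xi)=q$; the covariance $\rho(t)=\|t\|^{-\eta}L(\|t\|)$ with $\eta\in(0,d/q)$ has the required regularly varying form; and $f$ is continuous off the origin and monotone near it. Hence
\[
n^{q\eta/2-d}L^{-q/2}(n)\,A_n\tod R,\qquad n\to+\infty,
\]
with $R$ as in \eqref{eq:R}; when $\xi\equiv0$ this step is vacuous because then $A_n\equiv0$.

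\emph{Step 2.} For $B_n$ I would prove a conditional central limit theorem given $Y$. Conditionally on $Y$, $B_n$ is an integral of a centered field of zero dependence range with conditional variance $V_n=\int_{W_n}\chi(Y_t)\,dt$. The first ingredient is the law of large numbers $n^{-d}V_n\to\E[\chi(Y_0)]\,\nu_d(W)$ almost surely, which follows from the multiparameter ergodic theorem applied to the stationary field $\{\chi(Y_t)\}$ — note $\E[\chi(Y_0)]<\infty$ by \eqref{eq:clt-cond-2} and Lyapunov's inequality, and $Y$ is ergodic, in fact mixing, because $\rho(t)\to0$. The second ingredient is a conditional Lindeberg (or Lyapunov) condition for the integral against $Z$; this is exactly where assumption \eqref{eq:clt-cond-2} and the ensuing bound $\E\bigl[\bigl(\int_I\chi(Y_t)\,dt\bigr)^3\bigr]<\infty$ on compact $I$ enter. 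Putting the two together, for every $\theta\in\R$ the conditional characteristic function of $n^{-d/2}B_n$ converges almost surely to $\exp\{-\tfrac12\theta^2\E[\chi(Y_0)]\nu_d(W)\}$, and dominated convergence yields the unconditional
\[
n^{-d/2}B_n\tod \mathcal N\bigl(0,\E[\chi(Y_0)]\,\nu_d(W)\bigr).
\]
Since $\int_{W_n}g(X_t)\,dt=B_n$ and $\E[\chi(Y_0)]=\E[g^2(X_0)]$ when $\xi\equiv0$, this gives part~1 with $\sigma^2=\E[g^2(X_0)]\nu_d(W)$.

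\emph{Step 3.} For $\xi\not\equiv0$ it remains to combine the two scales. By Step~2, $B_n=O_P(n^{d/2})$, hence
\[
n^{q\eta/2-d}L^{-q/2}(n)\,B_n=n^{(q\eta-d)/2}L^{-q/2}(n)\cdot O_P(1)\toP 0,
\]
because $q\eta<d$ forces the deterministic prefactor to vanish. Thus $B_n$ is negligible at the scale governing $A_n$, and Slutsky's theorem together with Step~1 yields \eqref{eq:non-clt}. No joint convergence of $(A_n,B_n)$ is needed: in part~1 the $A_n$-term vanishes identically, and in part~2 the $B_n$-term is annihilated by the normalization. I expect the main obstacle to be Step~2 — setting up and controlling the (continuous-parameter) integral against the white noise $Z$, proving the a.s.\ convergence of the conditional variance, and verifying the conditional Lindeberg condition through the moment assumption \eqref{eq:clt-cond-2} — whereas Step~1 is essentially a citation and Step~3 is bookkeeping.
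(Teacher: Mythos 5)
Your proposal is correct and follows essentially the same route as the paper: the identical decomposition $g(X_t)=\xi(Y_t)+m(Y_t,Z_t)$, a conditional CLT for the fluctuation part via the ergodic theorem for $\int_{W_n}\chi(Y_t)\,dt$ and a conditional Lindeberg/Lyapunov argument (the paper's Lemma \ref{lem:iid}), the Dobrushin--Major/Leonenko--Olenko reduction theorem for the subordinated part (Lemma \ref{lem:lrd}), and the observation that the normalizations separate the two regimes. Your remark that $\E[g^2(X_0)]=\E[\xi^2(Y_0)]+\E[\chi(Y_0)]$, reconciling the variance $\E[\chi(Y_0)]\nu_d(W)$ of the conditional CLT with the stated $\sigma^2=\E[g^2(X_0)]\nu_d(W)$ when $\xi\equiv0$, is a useful detail the paper leaves implicit.
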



\begin{example}
{\rm
Assume that $g(y)=y$, $\E[G^2(Y_0)]<\infty$ and $\E[Z_0]=0$.
Then $\xi(y)=G(y) \E[Z_0]= 0$ and \eqref{eq:clt} always holds. In this case, there is no contribution from the long memory of the random field $Y_t$.
}
\end{example}

\begin{example}
{\rm
Assume that $g(y)=y-\E[G(Y_0)Z_0]$, $\E[Z_0]\not=0$. Then $\xi(y)=\E[Z_0]\left\{ G(y)-   \E[G(Y_0)]\right\}$.
Condition \eqref{eq:clt-cond-2} is satisfied if  $\E[|Z_0|^3]<+\infty$, $\E[G^4(Y_0)]<+\infty$.
In this case  $\xi(y)\not\equiv 0$, and \eqref{eq:non-clt} always holds.
}
\end{example}

\begin{example}\label{eq:LTrvm}{\rm
Assume that $g(y)=g_u(y)=\ind \{y>u\}-P\left( G(Y_0)Z_0>u\right)$ where $G$ is nonnegative or nonpositive $\nu_1$--a.e. Then
$$
\xi(y)=    \E[\ind \{G(y)Z_0>u\} ]-P\left( G(Y_0)Z_0>u\right)\not\equiv 0
$$
if $u\neq 0$, so case \eqref{eq:non-clt} applies. If $u=0$ then $\xi(y)\equiv 0$ (compare Remark \ref{rem:G}(iii)), so case \eqref{eq:clt} holds true.}
\end{example}

\begin{example}{\rm
Let the random volatility field $X_t=G(|Y_t|)Z_t$, $t\in\Z^d$ be as in Lemma  \ref{lemm:DSubGauss} where $\{  Z_t\}$ is a heavy--tailed white noise,  $\E Z_0^2=+\infty$. Let $Y$ satisfy the assumptions of Theorem \ref{thm:LTrvf}.
Choose $G(x)\ge 0$ as in Lemma \ref{lemm:HermRank}(ii),  and $C_Y(t)\sim \|t\|^{-\eta}$ as $ \|t\|\to+\infty$ be nonnegative.
Similarly to Example \ref{ex:RV1}, an analogue of relation \eqref{eq:C_Xiid} holds true: for $\mu=\delta_{\{u_0\}}$, $u_0>0$ we have
\begin{equation*}
\sum\limits_{t\in \Z^d, \, t\neq 0}\int\limits_{\R^2} {\cov}_X(t,u,v) \mu(du)\mu(dv) = \sum\limits_{k=1}^\infty \frac{ \langle \bar{F}_Z(u_0/\widetilde{G}),  H_k\rangle_\varphi^2 }{k!} \sum\limits_{t\in \Z^d, \, t\neq 0}\ C_Y^k(t) ,
\end{equation*}
where $\widetilde{G}(y)=G(|y|)$, $y\in\R$.
Since $\rg(\bar{F}_Z(u_0/\widetilde{G}) )=2$, $X$ is l.r.d. in the sense of Definition \ref{def:lrd} if $\sum_{t\in \Z^d, \, t\neq 0} C_Y^{2}(t) =+\infty$, that is, if $\eta<d/2$.

Consider function $\xi $ from Example  \ref{eq:LTrvm} with $u=u_0>0$ and $\widetilde{G}$ instead of $G$. By Lemma \ref{lemm:HermRank}, 2)
 $\rg(\xi )=2$.
By Theorem \ref{thm:LTrvf} and Example \ref{eq:LTrvm}, the asymptotic behavior of the cardinality of the level sets of $X$ at niveau $u_0$  is of l.r.d.-type if $\eta\in(0,d/2)$ which is in agreement with our definition.}
\end{example}
\begin{remark}
We would like to connect the assumption $\xi\equiv 0$ to our definition. Let $g,h$ be functions such that $\E[g(X_0)]=\E[h(X_0)]=0$.
If $\E [g(X_0)h(X_t)]<\infty$ for all $t$, and $\E[g(G(y)Z_0)]=\E[h(G(y)Z_0)]=0$ for all $y$, then for $t\not=0$
\begin{align}\label{eq:sv}
\cov(g(X_0),h(X_t))=\int\int\E [g(G(y_0)Z_0)] \E[h(G(y_t)Z_t)] P_{Y_0,Y_t}(dy_0,dy_t)=0 \;,
\end{align}
where $P_{Y_0,Y_t}$ is the joint law of $(Y_0,Y_t)$. In particular, take
$$
g(x)=g_u(x)=\ind(x>u)-P(X_0>u)\;, \ \ h(x)=h_v(x)=\ind(x>v)-P(X>v)\;.
$$
Then
 $$\sigma_{\mu,X}^2=\sum\limits_{t\in\Z^d, \, t\neq 0} \int_{\R^2} |\cov(g_u(X_0),h_v(X_t))|\, du dv=0,$$ 
 and the random field $X$ is s.r.d. according to Definition \ref{def:lrd} in case $\xi\equiv 0$.
\end{remark}

\section{Summary and outlook}
We proposed a new definition of long memory for stationary random fields $X$ indexed by any set $T\subset \R^d$ which works also for heavy tailed $X$. We showed that this definition fits well the asymptotic behavior of the volume of the excursion set of $X$ at a level $u\in\R$ in a unboundedly growing observation window $W_n$. This connection to non--central limit theorems was proven for a class of random volatility fields with a subordinated l.r.d. Gaussian volatility.

\section{Appendix: Proofs}\label{sec:Appendix}

\begin{proof}[Proof of Theorem \ref{thm:subGaussSRD}]	

If $X$ is a  centered stationary unit variance Gaussian random field with
covariance function $C_Y(t)$,
\begin{equation}\label{eq:CovIndGauss}
   \cov_X(t,u,v) = \frac{1}{2\pi}\int_{0}^{C_Y(t)} \frac{1}{\sqrt{1-r^2}}\,
   \exp\left\{-\frac{u^2 -2r uv + v^2}{2\left(1-r^2\right)} \right\}\,dr,
\end{equation}
see \cite[Lemma 2]{BuSpoTim12}.

Consider representation \eqref{eq:CovIndGauss}.  Since the density $f_{(U,V)}$  of a bivariate normal distribution with zero mean, unit variances and correlation coefficient $\mp r$ equals \[\frac{1}{2\pi \sqrt{1-r^2}}\exp\left\{-\frac{x^2\pm2rxy+y^2}{2(1-r^2)}\right\} \geq 0\]  then it is easy to see that
		\begin{gather*}
			|\cov_Y(t, x, y)| = \frac{1}{2\pi}\Int_{0}^{|C_Y(t)|}\frac{1}{\sqrt{1-r^2}}\exp\left\{-\frac{x^2-2sign(C_Y(t))rxy+y^2}{2(1-r^2)}\right\} \ud r.
		\end{gather*}
		Since $G$ is strictly monotone, by properties of the generalized inverse of $G$ we have
		\begin{gather*} \Int_T\Int_{-\infty}^{+\infty}\Int_{-\infty}^{+\infty}|\cov_X(t, u, v)| \mu(\ud u) \mu (\ud v) \ud t = \\
			\Int_T\!\!\! \Int_{(Im(G))^2} \!\!\! \!\!\! |\cov_Y(t, G^-(u), G^-(v))| \mu(\ud u) \mu (\ud v)  \ud t =\\
		     \Int_T\!\!\! \Int_{(Im(G))^2} \!\!\! \Int_{0}^{|C_Y(t)|} \!\!\! \exp\left( \!\! -\frac{(G^-(u))^2-2sign(C_Y(t))rG^-(u)G^-(v)+(G^-(v))^2}{2(1-r^2)}\right) \!\! \frac{\ud r \mu(\ud u) \mu (\ud v)  \ud t}{2\pi \sqrt{1-r^2}}.
		\end{gather*}
		By \cite[Formula (21.12.5)]{Cramer46} for the density $f_{(U,V)}$  with correlation coefficient $sign(C_Y(t))r\in(-1,1)$  it holds
		\begin{equation}\label{eq:bivariateGaussDensSeries}
		f_{U,V}(x,y) = \sum_{k=0}^{\infty}\dfrac{\Phi^{(k+1)}(x)\Phi^{(k+1)}(y)}{k!}(sign(C_Y(t))r)^k ,\;\; x,y \in \mathbb{R}.
		\end{equation}
		By condition $\nu_d(\{t\in T:|C_Y(t)|=1\})=0$, the above series converges  uniformly for $r \in (-1,1)$, so  integration over $r\in [0; |C_Y(t)|]$ and summation with respect to $k$ can be interchanged. Then the above triple integral reads
				\begin{gather*} 	
			\Int_T\Int_{Im(G)^2}\Int_{0}^{|C_Y(t)|}\sum_{k=0}^{\infty}\dfrac{\Phi^{(k+1)}(G^-(u))\Phi^{(k+1)}(G^-(v))}{k!}(sign(C_Y(t))r)^k \ud r\mu(\ud u) \mu (\ud v)  \ud t\\
			=	       	\Int_T\Int_{Im(G)^2}\varphi(G^-(u))\varphi(G^-(v))\sum_{k=0}^{\infty}\dfrac{H_k(G^-(u))H_k(G^-(v))}{k!}sign(C_Y(t))^k   \\
			\times  \frac{|C_Y(t)|^{k+1}}{k+1}  \mu(\ud u) \mu (\ud v)  \ud t\\
			=\Int_T\Int_{Im(G)^2}\!\!\!  |C_Y(t)|\varphi(G^-(u))\varphi(G^-(v))\sum_{k=0}^{\infty}\dfrac{H_k(G^-(u))H_k(G^-(v))}{(k+1)k!}C_Y(t)^k  \mu(\ud u) \mu (\ud v)  \ud t.
		\end{gather*}
		Abel's uniform convergence test allows  us to interchange the sum and the integral over $Im(G)^2 $. Since  $b_k\geq 0$ we get
		\begin{gather*} 		\Int_T\sum_{k=0}^{\infty}\Int_{Im(G)^2} \varphi(G^-(u))\varphi(G^-(v))\dfrac{H_k(G^-(u))H_k(G^-(v))}{(k+1)!}|C_Y(t)|C_Y(t)^k \ud r \mu(\ud u) \mu (\ud v)  \ud t \nonumber \\
			=\Int_T\sum_{k=0}^{\infty}\frac{1}{(k+1)!}\Big(\Int_{Im(G)}\varphi(G^-(u))H_k(G^-(u)) \mu(\ud u)\Big)^2 |C_Y(t)|C_Y(t)^k \ud t\nonumber \\ =  \label{eq:LstToMemory} 	  \Int_T\sum_{k=0}^{\infty}\frac{b_k(\mu)}{(k+1)!}|C_Y(t)|C_Y(t)^k \ud t=  \sum_{k=1}^{\infty}\frac{b_{k-1}(\mu)}{k!}\Int_{T}|C_Y(t)|\rho^{k-1}(t)\ud t ,
		\end{gather*}
		where the integral over $T$ and the sum are interchangeable by Tonelli's theorem subdividing $T$ into parts $T^+ = \{t\in T: C_Y(t)\geq 0\}$ and $T^- = \{t\in T: C_Y(t)< 0\}.$
		Then  $X=G(Y)$ has short memory if
		$$
			\sum_{k=1}^{\infty}\frac{b_{k-1}(\mu)}{k!}\Int_{T}|C_Y(t)|\rho^{k-1}(t)\ud t < +\infty
		$$
		for any finite measure $\mu$ on $\R$.
\end{proof}

\begin{proof}[Proof of Corollary \ref{cor:subGauss}]
\begin{enumerate}
\item   It follows from relation  \eqref{eq:b_k} using the change of variables $u=G(x)$ and by  \cite[Lemma 4.21]{beran:kulik:2013}.
\item		
W.l.o.g. assume $G$ to be an increasing function. Since the probability density of the centered uni- and bivariate Gaussian distribution is invariant under transformation $x\longmapsto-x,y\longmapsto-y$ we get
		\begin{align*}
			&\cov_X(t, u, v) = P(|Y_0|>G^{-}(u),|Y_t|>G^{-}(v))\\
			&\phantom{=}-P(|Y_0|>G^{-}(u))P(|Y_t|>G^{-}(v))\\
			&=
			2\left( P(Y_0>G^{-}(u),Y_t>G^{-}(v))-P(Y_0>G^{-}(u))P(Y_t>G^{-}(v))\right.\\
			&\phantom{=}+ \left. P(Y_0>G^{-}(u),Y_t<-G^{-}(v))-P(Y_0>G^{-}(u))P(Y_t<-G^{-}(v)) \right).
		\end{align*}
Denote $Z=-Y_t$, $x=G^-(u),$  $y=G^-(v)$.      It holds
$$P(Y_0>x,Y_t>y)-P(Y_0>x)P(Y_t>y) = \cov(\ind(Y_0\ge x), \ind(Y_t\ge y)) , $$	
$$P(Y_0>x,Y_t<-y)-P(Y_0>x)P(Y_t<-y)= \cov(\ind(Y_0>x), \ind(Z>y)) .$$		
Since   $\cov(Y_0,Z) = -C_Y(t) $ and $xy=G^{-}(u)G^{-}(v)\ge 0$  we have by formula \eqref{eq:CovIndGauss} that
\begin{gather*}
			|\cov_X(t, u, v)|  =\frac{2}{2\pi} \left|	\Int_{0}^{C_Y(t)}\frac{1}{\sqrt{1-r^2}}\exp\left(-\frac{x^2-2rxy+y^2}{2(1-r^2)}\right) \ud r \right. \\
			+\left. \Int_{0}^{-C_Y(t)}\frac{1}{\sqrt{1-r^2}}\exp\left(-\frac{x^2-2rxy+y^2}{2(1-r^2)}\right) \ud r \right| \\
			 = \Int_{0}^{|C_Y(t)|}  \left(\exp\left(-\frac{x^2-2rxy+y^2}{2(1-r^2)}\right) - \exp\left(-\frac{x^2+2rxy+y^2}{2(1-r^2)}\right)\right)  \frac{\ud r}{\pi \sqrt{1-r^2}}.
\end{gather*}	   	
		
	Similarly to the proof of Theorem \ref{thm:subGaussSRD}, we use representation \eqref{eq:bivariateGaussDensSeries} to write
		\begin{gather*}	
		   	\Int_T\Int_{-\infty}^{+\infty}\Int_{-\infty}^{+\infty} |\cov_X(t, u, v)| \mu(\ud u) \mu( \ud v) \ud t \\=
			2\Int_T\Int_{Im(G)^2}  \sum_{k=0}^{\infty}\frac{1-(-1)^k}{(k+1)!}H_{k}(x)H_{k}(y)\varphi(x)\varphi(y)|C_Y(t)|^{k+1} \mu(\ud u) \mu( \ud v) \ud t\\=
			\Int_T \sum_{k=1}^{\infty}\frac{4}{(2k)!}\left(\Int_{Im(G)}H_{2k-1}(G^-(u))\varphi(G^-(u)) \mu(\ud u) \right)^2|C_Y(t)|^{2k} \ud t  \\=	
			4\sum_{k=1}^{\infty}\frac{b_{2k-1}(\mu)}{(2k)!}\Int_T \rho^{2k}(t) \ud t.		
		\end{gather*}
\end{enumerate}		
 \end{proof}

\begin{proof}[Proof of Corollary \ref{cor1}]
Choose $\mu=\delta_{\{ u_0\}}$, $u_0\in\R$ and write

\begin{multline*}
\int\limits_T\int\limits_{\R^2}\cov_X(t,u,v)\, \mu(du)\,\mu(dv)\,dt =   \int\limits_T
\cov\left( \bar{F}_Z\big(u_0/A\big),
\bar{F}_Z\big(u_0/A\big) \right)
 \,dt \\
 +   \int\limits_T
 \E \left[ \cov_Z(t,u_0/A,u_0/A)\right]  \,dt
\ge  \int\limits_T \var\left( \bar{F}_Z\big(u_0/A\big)\right)  \,dt = +\infty
\end{multline*}
since $Z\in \pa$, $\bar{F}_Z\big(u_0/A\big)$ is non-degenerate and bounded.
\end{proof}

\begin{proof}[Proof of Corollary \ref{cor:rvt0}]
Without loss of generality assume $Z, Y\in$ \pa. Then $Y^\alpha\in$ \pa , too, and the second term in \eqref{eq:covRandVol} is nonnegative. Denote
$$A_{u,v}(t)=\cov\left( \bar{F}_Z\big(u/Y_0\big),
\bar{F}_Z\big(v/Y_t\big) \right), \quad u,v\in\R_+, \, t\in T . $$
Since $Y\in$ \pa   \ and the function $\bar{F}_Z\big(u/\cdot\big)$ is bounded and nondecreasing for $u>0$ we get $A_{u,v}(t)\ge 0$ for all $u,v\in\R_+, \, t\in T .$
	Using the regular variation of the tail  of $Z_0$, the independence of $Y$ and $Z$ and Potter bound \cite[Proposition 2.6]{Resnick07} one can easily show that under the above assumptions on the integrability of $Y$ it holds
\begin{equation*}
A_{u,v}(t)\sim\
\bar{F}_Z(u)\bar{F}_Z(v) \cov\left( Y_0^\alpha, Y_t^\alpha \right), \quad u,v\to+\infty,
\end{equation*}	
for any $t\in T$. Then  for sufficiently large $N>0$ there exists $u_0>N$ such that for the Dirac measure $\mu=\delta_{\{  u_0 \}}$ and some $\varepsilon\in(0,1)$ we have
\begin{equation*}
\int\limits_T \int\limits_{\R^2}\! \!
\cov_X(t,u,v)
 \mu(du)\mu(dv)dt\ge
 \int\limits_T \! \!  A_{u_0,u_0}(t) dt\ge \\
 \varepsilon  \bar{F}_{Z}^2 (u_0) \! \! \int\limits_T \! \!  \cov\left( Y_0^\alpha, Y_t^\alpha \right) dt
\end{equation*}
which is infinite if $Y^\alpha$ is l.r.d.
Thus, $X=Y Z$ is l.r.d. if $Y^\alpha$ is l.r.d.
\end{proof}

\begin{proof}[Proof of Lemma \ref{lemm:DSubGauss}]
Without loss of generality, assume $G$ to be nonnegative.  By Lemma \ref{lem:CovGauss},  Fubini and Tonelli theorems for $G_u(y)=\bar{F}_Z\left(u/G(y)\right)$  we get
\begin{multline*}
D_{\mu}\left(G(Y),Z_0\right)=\int_T\int_{\R^2} \cov \left( G_u(Y_0), G_v(Y_t)\right) \mu(du) \mu(dv) \, dt \\
= \sum\limits_{k=1}^\infty \frac{\left(  \int_\R \langle G_u,  H_k\rangle_\varphi \, \mu(du) \right)^2 }{k!}\int_T \rho^k(t) \, dt .
\end{multline*}
The change of order of the sum and integrals is justified by Weierstrass uniform convergence test since for almost all $t\in T$
$$
\sum\limits_{k=1}^\infty \frac{\left| \langle G_u,  H_k\rangle_\varphi \langle G_v,  H_k\rangle_\varphi \right| }{k!}\rho^k(t) \le
\sum\limits_{k=1}^\infty \frac{  \langle 1,  |H_k|\rangle_\varphi^2 }{k!}\rho^k(t) \le
\sum\limits_{k=1}^\infty \rho^k(t) <\infty
$$
due to $\langle 1,  |H_k|\rangle_\varphi \le \sqrt{k!}$ by Cauchy--Schwarz inequality and due to condition  {\bf ($\rho$)}.
\end{proof}

\begin{proof}[Proof of Lemma \ref{lemm:HermRank}]
\begin{enumerate}
\item If $G:\R\to\R $ is monotone then $\rg (G)=1$ due to
\begin{equation}\label{eq:GH1}
\langle G  , H_1\rangle_\varphi=\E[Y G(Y)] = \int_0^\infty \left( G(y)- G(-y)\right)y \varphi(y)dy    \neq 0.
\end{equation}
What is the Hermite rank of $\zeta_{G,Z,u} $? First consider $Z\equiv 1$. Since the Hermite rank of
$y\mapsto \ind \{ y>u \} - \bar{F}_Y(u)$ is one we can write
$$
\langle \zeta_{G,1,u}  , H_1\rangle_\varphi=\E[Y \ind \{  G(Y) >u \}] = \E[Y \ind \{  Y >G^-(u) \}]   \neq 0,
$$
where  $G$ is non--decreasing w.l.o.g. Hence, $\rg(  \zeta_{G,1,u})=1 $ for any $u\in \R$.
Now let $G:\R\to\R_\pm$ and $Z$ be arbitrary. W.l.o.g. assume $G$ to be nonnegative.
Then
$$
\langle \zeta_{G,Z,u} , H_1\rangle_\varphi=\int_\R \bar{F}_Z\big(  u/G(y)\big) y \varphi (y)\, dy \neq 0,
$$
since for any $u\neq 0$ the function $y\mapsto \bar{F}_Z\left(u/G(y)\right)$ is monotone, and we can use the reasoning \eqref{eq:GH1}.  For nonpositive $G$ replace $ \bar{F}_Z$ above by $ {F}_Z$.

\item W.l.o.g. assume that $G$ is nonnegative and nondecreasing. We prove that $\rg (\widetilde{G})=2$.

Clearly, since $y\mapsto G(|y|)$ is even, we have $\E[Y G(|Y|)]=0$. Now,
\begin{align*}
\E[H_2(Y)G(|Y|)]=2\int_0^\infty G(y)(y^2-1)\varphi(y)dy\;.
\end{align*}
We note that
\begin{equation}\label{eq:int}
\int_0^\infty (y^2-1)\varphi(y)dy=0
\end{equation}
 and hence {by symmetry} $\int_0^1(y^2-1)\varphi(y)dy=-\int_1^\infty(y^2-1)\varphi(y)dy$.
Also, by the mean value theorem, due to monotonicity of non--constant $G$, there exists $y_0\in [0,1)$ such that
\begin{align*}
\int_0^1 G(y)(y^2-1)\varphi(y)dy=G(y_0)\int_0^1(y^2-1)\varphi(y)dy\;.
\end{align*}
Therefore,
\begin{align*}
\int_0^\infty G(y)(y^2&-1)\varphi(y)dy \\
&\geq G(y_0) \int_0^1 (y^2-1)\varphi(y)dy + G(1)\int_1^\infty (y^2-1)\varphi(y)dy\\
& = -G(y_0) \int_1^\infty (y^2-1)\varphi(y)dy +G(1)\int_1^\infty (y^2-1)\varphi(y)dy\\
& =(G(1)-G(y_0)) \int_1^\infty (y^2-1)\varphi(y)dy>0\;.
\end{align*}
For nonnegative nonincreasing $G$, we can use the estimate
\begin{align*}
&\int_0^\infty G(y)(y^2-1)\varphi(y)dy \leq G(y_0) \int_0^1 (y^2-1)\varphi(y)dy \\
& + G(1)\int_1^\infty (y^2-1)\varphi(y)dy
 =(G(y_0)-G(1)) \int_0^1 (y^2-1)\varphi(y)dy<0\;.
\end{align*}
If $G(y)\le 0$ just multiply it by $-1$. This proves that the Hermite rank of $G(|y|)$ is 2.

Now compute the Hermite rank of $\zeta_{\widetilde{G},1,u} $ for any $u\in\R$. Since  $\zeta_{\widetilde{G},1,u} $
is even, $\rg (\zeta_{\widetilde{G},1,u})>1$. Assuming w.l.o.g. that $G$ is nonnegative and nondecreasing we calculate
\begin{align*}
\langle & \zeta_{\widetilde{G},1,u}, H_2 \rangle_\varphi=\E[(Y^2-1)\ind \{G(|Y|)>u\}] \\
&= \int_{\R}(y^2-1)\ind \{|y|>G^-(u)\} \varphi(y)\, dy=2\int_{G^-(u)}^\infty (y^2-1) \varphi(y)\, dy\neq 0
\end{align*}
due to \eqref{eq:int} and $G^-(u)\neq 0$. So $\rg \zeta_{\widetilde{G},1,u}=2$. For general $Z$, we note that
$\zeta_{\widetilde{G},Z,u}$ is even, so $\rg (\zeta_{\widetilde{G},Z,u})>1$. If $G$ is non--negative then
$$
\langle \zeta_{\widetilde{G},Z,u} , H_2\rangle_\varphi=\int_\R \bar{F}_Z\big(  u/G(|y|)\big) H_{2}(y) \varphi (y)\, dy\neq 0
$$
by the first part of the proof of 2)  since $\bar{F}_Z\big(  u/G(|y|)\big)$ is a monotone even function of $y$.
Modifications of the proof for $G\le 0 $ or $G$ nonincreasing are obvious.
\end{enumerate}
\end{proof}
\begin{proof}[Proof of Theorem \ref{thm:LTrvf}]
Let ${\cal Y}$ be the $\sigma$--algebra  generated by the entire random field $\{Y_t,t\in \Z^d\}$. Then
\begin{equation*}
\sum_{t\in W_n}g(X_t) =\sum_{t\in W_n}\left(g(X_t)-\E[g(X_t)\mid {\cal Y}]\right)+\sum_{t\in W_n}\E[g(X_t)\mid {\cal Y}]= M_n+K_n\;,
\end{equation*}
where
 $$M_n=\sum_{t\in W_n}\left(g(X_t)-\E[g(X_t)\mid {\cal Y}]\right)=\sum_{t\in W_n} m(Y_t,Z_t)$$ and
$$K_n=\sum_{t\in W_n}\E[g(X_t)\mid {\cal Y}] =\sum_{t\in W_n}\xi(Y_t) \;.$$
The above decomposition is allowed by \eqref{eq:clt-cond-1}.
The limiting behaviour of the sum depends on an interplay between $M_n$ and $K_n$.
First, we state the limiting results for $M_n$ and $K_n$ separately.
\begin{lemma}\label{lem:iid}
Under the assumptions of Theorem \ref{thm:LTrvf}, it holds
\begin{align*}
\tilde M_n:=n^{-d/2}M_n\tod {\cal N}(0,\sigma^2)\;,
\end{align*}
where $\sigma^2=\E[\chi(Y_0)]2^d>0$.
\end{lemma}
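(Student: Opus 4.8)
The plan is to condition on the random field $Y$ and recognize $M_n$ as an integral of a conditionally independent (given $\mathcal Y$) centered random field, then apply a central limit theorem for triangular arrays together with the law of large numbers for the conditional variance. First I would note that, conditionally on $\mathcal Y$, the integrand $m(Y_t,Z_t) = g(G(Y_t)Z_t) - \xi(Y_t)$ is a collection of independent random variables (since $Z$ is a white noise and is independent of $Y$), centered by construction, with conditional variance $\E[m^2(Y_t,Z_t)\mid\mathcal Y] = \chi(Y_t)$. Hence the conditional variance of $\tilde M_n = n^{-d/2}M_n$ is
\begin{equation*}
n^{-d}\int_{W_n}\chi(Y_t)\,dt = \nu_d(W)\cdot\frac{1}{\nu_d(W_n)}\int_{W_n}\chi(Y_t)\,dt\;,
\end{equation*}
which by the ergodic theorem (or an $L^1$/$L^2$ law of large numbers for the stationary field $\chi(Y)$, whose needed moments follow from \eqref{eq:clt-cond-2}) converges a.s.\ and in $L^1$ to $\nu_d(W)\,\E[\chi(Y_0)] = \sigma^2 > 0$ as $n\to\infty$; positivity is guaranteed because $\E[g^2(X_0)]>0$ forces $\chi$ to be non-degenerate.

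Next I would establish a conditional central limit theorem: for $P$-almost every realization of $Y$, the normalized conditionally-centered integral $\tilde M_n$ converges in distribution (conditionally on $\mathcal Y$) to $\mathcal N(0,\sigma^2)$. For this one checks a Lyapunov-type condition for the conditionally independent array: the key bound is that the conditional third absolute moment $\E[|m(Y_t,Z_t)|^3\mid\mathcal Y] = \chi_3(Y_t)$, say, has a spatial average $n^{-d}\int_{W_n}\chi_3(Y_t)\,dt$ that stays bounded (again by stationarity and a moment condition that is implied by \eqref{eq:clt-cond-2} via Lyapunov's inequality, exactly the remark made just before the theorem), so that the Lyapunov ratio is of order $n^{-3d/2}\cdot n^d / (n^{-d}\cdot n^d)^{3/2} = O(n^{-d/2}) \to 0$. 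Care is needed because $W_n$ is a growing convex body rather than a box, but one can discretize $W_n$ into unit cubes and treat the integral as a Riemann-type sum of block contributions that are conditionally independent across disjoint cubes; the standard Lindeberg/Lyapunov CLT for row-wise independent triangular arrays then applies on each realization.

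Finally I would unconditionalize: since the conditional limit law $\mathcal N(0,\sigma^2)$ is non-random (the limiting variance $\sigma^2$ does not depend on the realization of $Y$), convergence in distribution conditionally on $\mathcal Y$ for a.e.\ $Y$ upgrades to unconditional convergence in distribution. Concretely, for every bounded continuous $h$, $\E[h(\tilde M_n)\mid\mathcal Y] \to \E[h(\mathcal N(0,\sigma^2))]$ almost surely, and by dominated convergence $\E[h(\tilde M_n)] = \E\big[\E[h(\tilde M_n)\mid\mathcal Y]\big] \to \E[h(\mathcal N(0,\sigma^2))]$, which is the assertion. The main obstacle I anticipate is the conditional CLT over the non-cubic window $W_n$ with a spatially varying (but, given $\mathcal Y$, deterministic) variance profile $\chi(Y_t)$: one must make sure the Lyapunov/Lindeberg condition holds uniformly enough along almost every $Y$-path, and that the boundary effects of $W_n$ are negligible — both handled by the moment assumption \eqref{eq:clt-cond-2} and the convexity/regularity of $W$, but requiring some care in the bookkeeping.
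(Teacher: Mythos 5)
Your overall architecture coincides with the paper's proof: condition on $\mathcal Y$, partition $W_n$ into unit cubes whose block integrals $V_{\mathbf j}=\int_{I_{\mathbf j}}m(Y_t,Z_t)\,dt$ are conditionally independent given $\mathcal Y$, prove a conditional CLT, identify the limiting variance via the ergodic theorem applied to $n^{-d}\int_{W_n}\chi(Y_t)\,dt$, and remove the conditioning by bounded convergence of conditional characteristic functions. There is, however, one concrete gap: your Lyapunov condition requires the conditional third absolute moment $\chi_3(y)=\E[|m(y,Z_0)|^3]$ to be finite with a controlled spatial average, and you claim this follows from \eqref{eq:clt-cond-2} ``via Lyapunov's inequality, exactly the remark made just before the theorem''. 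It does not. Condition \eqref{eq:clt-cond-2} reads $\E[\chi^3(Y_0)]<\infty$ with $\chi(y)=\E[m^2(y,Z_0)]$, i.e., it is a moment condition on the \emph{conditional second moment}, not on the conditional third moment; by Jensen's inequality $\chi^{3/2}(y)\le\chi_3(y)$, so the implication goes the wrong way, and under the stated hypotheses ($\E[g^2(X_0)]<\infty$ from \eqref{eq:clt-cond-3} with $\theta=1$, plus \eqref{eq:clt-cond-2}) the quantity $\chi_3$ may well be infinite. The remark preceding the theorem concerns $\E[(\int_I\chi(Y_t)\,dt)^3]$, which the paper needs for a different purpose (controlling the cubic error in the expansion of the logarithm of the product of quadratic approximants), not for a third-moment Lyapunov bound.

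The fix --- and what the paper actually does --- is to avoid third absolute moments altogether: expand the conditional characteristic function of each block using $|e^{itz}-(1+itz-t^2z^2/2)|\le\min\{|tz|^2,|tz|^3\}$, i.e., verify a Lindeberg rather than a Lyapunov condition. The resulting error $\sum_{\mathbf j}\E[\min\{z^2V_{\mathbf j}^2/n^{d},\,|z|^3|V_{\mathbf j}|^3/n^{3d/2}\}\mid\mathcal Y]$ is then handled by splitting on the event $\{|V_{\mathbf j}|\le\eps n^{d/2}\}$ and using only $\E[V_{\mathbf j}^2]<\infty$. A secondary point: you assert a.s.\ convergence of the conditional variance $n^{-d}\int_{W_n}\chi(Y_t)\,dt$ in order to run a conditional CLT ``for a.e.\ realization of $Y$''; the paper only establishes convergence in probability (mean-square ergodic theorem, after checking $\cov(\chi(Y_0),\chi(Y_t))\to0$ via the Hermite expansion), which suffices if one works throughout with conditional characteristic functions and concludes by uniform integrability rather than by an almost-everywhere conditional limit law.
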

\begin{proof}{
We calculate
\begin{align*}
&\E\left[\exp\{iz \tilde M_n\}\mid {\cal Y}\right] =
\E\left[\exp\left\{\frac{iz}{n^{d/2}} \sum_{t\in W_n} m(Y_t,Z_t)\right\}\mid {\cal Y}\right] \\
&
=:\E\left[\exp\left\{\frac{iz}{n^{d/2}} \sum_{t\in W_n}V_{t}\right\}\mid {\cal Y}\right]\;,
\end{align*}
where
$
V_{t}=m(Y_t,Z_t).
$
Note that, due to stationarity of $Y$ and $Z$, the random variables $V_{t}$ are identically distributed and conditionally independent, given ${\cal Y}$. Therefore,
\begin{align*}
&\E\left[\exp\{iz \tilde M_n\}\mid {\cal Y}\right] =
\E\left[\exp\left\{\frac{iz}{n^{d/2}} \sum_{t\in W_n}V_{t}\right\}\mid {\cal Y}\right]\\
&= \prod_{t\in W_n}\E\left[\exp\left\{\frac{iz}{n^{d/2}} V_{t}\right\}\mid {\cal Y}\right]\;.
\end{align*}
The standard inequality,
\begin{align*}
|\exp(it z)-(1+itz-t^2z^2/2)|\leq \min\{|tz|^2,|tz|^3\}
\end{align*}
yields
\begin{align*}
&\left|\E\left[\exp\left\{\frac{iz}{n^{d/2}} V_{t}\right\}\mid {\mathcal Y}\right]-\E\left[\left(1+\frac{izV_t}{n^{d/2}}-\frac{1}{2}\frac{z^2V_t}{n^d}\right)\mid {\mathcal Y}\right]\right|\\
&\leq \E\left[\min\left\{\frac{|z|^2V_t^2}{n^d},
\frac{|z|^3|V_t|^3}{n^{3d/2}} \right\}\mid {\mathcal Y}\right]=:\E[V_{t,n}\mid {\mathcal Y}]\;.
\end{align*}
For complex numbers $z_1,\ldots,z_m$, $w_1,\ldots,w_m$ of modulus at most 1, we have
\begin{align*}
\left|\prod_{i=1}^mz_i-\prod_{i=1}^mw_i\right|\leq \sum_{i=1}^m|z_i-w_i|\;.
\end{align*}
Hence
\begin{align*}
&A_n({\mathcal Y}):=\left|\prod_{t\in W_n}\E\left[\exp\left\{\frac{iz}{n^{d/2}} V_{t}\right\}\mid {\cal Y}\right]-
\prod_{t\in W_n}
\E\left[\left(1+\frac{izV_t}{n^{d/2}}-\frac{1}{2}\frac{z^2V_t^2}{n^d}\right)\mid {\mathcal Y}\right]
\right|\\
&\leq \sum_{t\in W_n}\left|\E\left[\exp\left\{\frac{iz}{n^{d/2}} V_{t}\right\}\mid {\cal Y}\right]-\E\left[\left(1+\frac{izV_t}{n^{d/2}}-\frac{1}{2}\frac{z^2V_t^2}{n^d}\right)\mid {\mathcal Y}\right]\right|\\
&\leq \sum_{t\in W_n}\E[V_{t,n}\mid {\mathcal Y}]\;.
\end{align*}
We argue that
\begin{align}\label{eq:AnY-tozero}
A_n({\mathcal Y})\to 0
\end{align}
in probability.
If this is the case, then the conditional characteristic function
\begin{align*}
\E\left[\exp\{iz \tilde M_n\}\mid {\cal Y}\right]
\end{align*}
and
\begin{align*}
B_n({\mathcal Y}):=\prod_{t\in W_n}
\E\left[\left(1+\frac{izV_t}{n^{d/2}}-\frac{1}{2}\frac{z^2V_t^2}{n^d}\right)\mid {\mathcal Y}\right]
\end{align*}
have the same limit in probability.  Applying the $\log$ to the above expression and $\log(1-x)=-x+O(x^3)$ we have
\begin{align*}
&\log B_n({\mathcal Y})=
\sum_{t\in W_n}\log\E\left[1+\frac{iz V_{t}}{n^{d/2}}- \frac{z^2 V_{t}^2}{2 n^{d}}\mid {\cal Y}\right]\\
&  =\frac{iz}{n^{d/2}}\sum_{t\in W_n}\E[V_t\mid {\mathcal Y}]-\frac{z^2}{2n^{d}}\sum_{t\in W_n} \E[V_{t}^2\mid {\cal Y}]\\
&+ O(1)\frac{|z|^3}{n^{3d/2}}\sum_{t\in W_n}\left(|\E[V_t\mid {\mathcal Y}]|\right)^3+O(1)\frac{z^6}{n^{3d}}\sum_{t\in W_n} \left(\E[V_{t}^2\mid {\cal Y}]\right)^3\;.
\end{align*}
The expression in the last line is $o_P(1)$ by \eqref{eq:clt-cond-2}.
By the definition, $\E[m(y,Z_t)]=0$ and hence $\E[V_{t}\mid {\cal Y}]=0$.
We have $\E[V_{t}^2\mid {\cal Y}]=\chi(Y_t)$
and therefore
\begin{align*}
&\log B_n ({\cal Y}) = -\frac{z^2}{2 n^{d}}\sum_{t\in W_n} \chi(Y_t)+o_p(1)\;.
\end{align*}
Since $\chi$ is measurable, the ergodic theorem (\cite[p. 339]{Yaglom871})
implies that
\begin{align*}
\frac{1}{n^{d}}\sum_{t\in W_n} \chi(Y_t) \stackrel{P}{\longrightarrow}
\E[\chi(Y_0)] 2^d, \quad n\to +\infty,
\end{align*}
whenever the covariance of the field $\chi(Y_t)$ goes to zero as $\|t\|\to +\infty$.
To check the latter property, we use Lemma \ref{lem:CovGauss} to conclude
\begin{align*}
|\cov(\chi(Y_0),\chi(Y_t))|\leq |C_Y(t)|\sum_{k=1}^\infty  \frac{\langle\chi, H_k \rangle_{\varphi}^2}{k!}\to 0
\end{align*}
as $\|t\|\to +\infty$, since the infinite series in the last expression is finite due to $\var(\chi(Y_0))<\infty$; cf. \eqref{eq:clt-cond-2}.}
Hence, $\log B_n({\cal Y})\to -z^2\sigma^2/2$ in probability.
By continuous mapping theorem, it holds
\begin{align*}
& \E\left[\exp\{iz \tilde{M}_n\}\mid {\cal Y}\right] \stackrel{P}{\longrightarrow}  e^{-z^2 \sigma^2/2}, \quad n\to +\infty\;.
\end{align*}
Since $\left|  \E\left[\exp\{iz \tilde{M}_n\}\mid {\cal Y}\right] \right|\le 1$ for all $n\in\N$ this sequence is uniformly integrable. Using the property of $L^1$--convergence of uniformly integrable sequences we get
$$
\E\left[\exp\{iz \tilde{M}_n\}\right]  \to e^{-z^2 \sigma^2/2}, \quad n\to +\infty,
$$
and we are done.
\end{proof}
\begin{lemma}\label{lem:lrd}
Under the assumptions of Theorem \ref{thm:LTrvf}, it holds
$$
n^{q\eta/2-d}L^{-q/2}(n)K_n\tod R\;, \quad n\to\infty.
$$
\end{lemma}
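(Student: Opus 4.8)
The key point is that $K_n=\int_{W_n}\xi(Y_t)\,dt$ is a functional of the Gaussian field $Y$ \emph{alone}, since $\xi(y)=\E[g(G(y)Z_0)]$ is a deterministic function of $y$; thus the statement is a non-central limit theorem for a subordinated Gaussian random field, of exactly the type already quoted in Section~\ref{subsect:LTFinVar}. First I would record that $\xi\in L^2_\varphi(\R)$: by the conditional Jensen inequality and \eqref{eq:clt-cond-3} with $\theta=1$,
\begin{equation*}
\E[\xi^2(Y_0)]=\E\big[\E[g(X_0)\mid{\cal Y}]^2\big]\le\E[g^2(X_0)]<\infty .
\end{equation*}
Hence $\xi$ has a Hermite expansion $\xi=\sum_{m\ge q}\frac{J(m)}{m!}H_m$, where $q=\rg(\xi)$, $J(m)=\langle\xi,H_m\rangle_\varphi$ with $J(q)\neq0$, and $\sum_{m\ge q}\frac{J(m)^2}{m!}=\E[\xi^2(Y_0)]<\infty$, so that
\begin{equation*}
K_n=\frac{J(q)}{q!}\Int_{W_n}H_q(Y_t)\,dt+\mathcal{R}_n,\qquad
\mathcal{R}_n:=\sum_{m=q+1}^\infty\frac{J(m)}{m!}\Int_{W_n}H_m(Y_t)\,dt .
\end{equation*}

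For the leading term I would invoke the non-central limit theorem for subordinated Gaussian random fields \cite[Theorem~1]{LeonOlenko14} (see also \cite{IvLeon89,DobrMajor79,Taqqu79}), precisely as in the derivation of \eqref{eq:non-clt:subGauss}: under $\eta\in(0,d/q)$ and the assumed regularity of the spectral density $f$ of $Y$ (continuity off the origin and monotonicity near $0$, i.e.\ \cite[Assumption~2]{LeonOlenko14}),
\begin{equation*}
n^{q\eta/2-d}L^{-q/2}(n)\,\frac{J(q)}{q!}\Int_{W_n}H_q(Y_t)\,dt\ \tod\ R ,
\end{equation*}
where $R$ is the multiple Wiener--It\^o integral \eqref{eq:R}, with the scalar $J(q)/q!$ absorbed into $R$ consistently with the normalization used for \eqref{eq:non-clt:subGauss} (for $q=1$ the limit is Gaussian, for $q\ge2$ it is of $q$-Rosenblatt type). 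It then remains to show $n^{q\eta/2-d}L^{-q/2}(n)\,\mathcal{R}_n\toP0$.

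Since $\E[\mathcal{R}_n]=0$, it suffices that the renormalized variance vanishes. Using orthogonality of Hermite polynomials of distinct orders and $\E[H_m(Y_0)H_m(Y_t)]=m!\,\rho^m(t)$,
\begin{equation*}
\var(\mathcal{R}_n)=\sum_{m=q+1}^\infty\frac{J(m)^2}{m!}\Int_{W_n}\Int_{W_n}\rho^m(t-s)\,dt\,ds .
\end{equation*}
From $\rho(t)=\|t\|^{-\eta}L(\|t\|)$ being regularly varying and $|\rho|\le1$, a Karamata/Potter estimate (cf.\ \cite[Proposition~2.6]{Resnick07}) gives, for each $m>q$,
\begin{equation*}
\Int_{W_n}\Int_{W_n}|\rho(t-s)|^m\,dt\,ds=
\begin{cases}
O\!\big(n^{2d-m\eta}L^m(n)\big), & m\eta<d,\\
O\!\big(n^{d}\log n\big), & m\eta=d,\\
O\!\big(n^{d}\big), & m\eta>d,
\end{cases}
\end{equation*}
so that after multiplying by $n^{q\eta-2d}L^{-q}(n)$ one obtains quantities of order $n^{(q-m)\eta}L^{m-q}(n)$, $n^{q\eta-d}(\log n)L^{-q}(n)$, $n^{q\eta-d}L^{-q}(n)$ respectively, all tending to $0$ since $q\eta<d$ and $m>q$. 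Only finitely many indices satisfy $m\eta<d$, and those terms I would dispatch individually; for every $m$ with $m\eta\ge d$ the bound $|\rho|\le1$ yields the \emph{uniform} estimate $n^{q\eta-2d}L^{-q}(n)\int_{W_n}\int_{W_n}|\rho(t-s)|^m\,dt\,ds\le C\,n^{q\eta-d}(\log n)L^{-q}(n)=:\eps(n)\to0$, hence $\sum_{m:\,m\eta\ge d}\frac{J(m)^2}{m!}\,n^{q\eta-2d}L^{-q}(n)\int_{W_n}\int_{W_n}|\rho(t-s)|^m\,dt\,ds\le\eps(n)\,\E[\xi^2(Y_0)]\to0$. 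Therefore $n^{q\eta-2d}L^{-q}(n)\var(\mathcal{R}_n)\to0$, so $n^{q\eta/2-d}L^{-q/2}(n)\mathcal{R}_n\toP0$, and Slutsky's theorem gives $n^{q\eta/2-d}L^{-q/2}(n)K_n\tod R$.

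The step I expect to be the main obstacle is the negligibility of $\mathcal{R}_n$, more precisely dominating the Hermite tail \emph{uniformly in $n$}: termwise decay is immediate from the regular variation of $\rho$, but to sum over $m$ one needs a majorant for $n^{q\eta-2d}L^{-q}(n)\int_{W_n}\int_{W_n}|\rho(t-s)|^m\,dt\,ds$ that is summable against $J(m)^2/m!$, and it is the bound $|\rho|\le1$ (which collapses the infinitely many $m\eta\ge d$ terms into one uniform estimate) that makes dominated convergence applicable. Alternatively, one could cite Taqqu's reduction theorem in its random-field form to bypass the explicit remainder estimate, but making the reduction self-contained via the variance computation above seems cleanest here.
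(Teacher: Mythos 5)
Your proof is correct and follows essentially the same route as the paper: since $K_n=\int_{W_n}\xi(Y_t)\,dt$ is a functional of the Gaussian field $Y$ alone, one reduces to the leading Hermite term $\frac{J(q)}{q!}\int_{W_n}H_q(Y_t)\,dt$ and invokes the non-central limit theorem of Leonenko--Olenko for $\int_{W_n}H_q(Y_t)\,dt$. The only difference is that the paper disposes of the Hermite tail by citing the reduction theorem \cite[Theorem 4]{LeonOlenko14}, whereas you prove that reduction explicitly via the variance/Karamata estimate, which is a sound, self-contained substitute.
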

\begin{proof}{
Consider the random variable
$$
K_n(q)=\sum_{m=q}^{\infty}\frac{J(m)}{m!}\int_{[-n,n]^d}H_m(Y_t)dt\;.
$$
According to \cite[Theorem 4]{ LeonOlenko14} and \cite[Theorem 4.3]{AloOle19} the random variables
$$
\frac{K_n}{\sqrt{\var K_n}}
, \qquad
\frac{K_n(q)}{\sqrt{\var K_n(q)}}
$$
have the same limiting distributions as $n\to+\infty$.
Furthermore, if $\eta\in (0,d/q)$ we have by \cite[Theorem 5]{ LeonOlenko14} that
$$
n^{q\eta/2-d}L^{-q/2}(n)\int_{[-n,n]^d}H_q(Y_t)dt
$$
converges in distribution to  random variable $R$.
}
\end{proof}

If $\xi(y)\equiv 0$, the long memory part $K_n$ is not present and we apply Lemma \ref{lem:iid}. If $\xi(y)\not\equiv 0$, we note that the rate of convergence in Lemma \ref{lem:lrd} is slower than in Lemma \ref{lem:iid}, whenever $\eta\in (0,d/q)$.
\end{proof}

\section*{Acknowledgement}
We thank P. Doukhan for his remarks on $\psi$--mixing. E. Spodarev is grateful to the German Academic Exchange Service (DAAD) for the support of his research  stay in Ottawa in the fall 2015.



\newcommand{\noopsort}[1]{} \newcommand{\printfirst}[2]{#1}
  \newcommand{\singleletter}[1]{#1} \newcommand{\switchargs}[2]{#2#1}
\providecommand{\bysame}{\leavevmode\hbox to3em{\hrulefill}\thinspace}
\providecommand{\MR}{\relax\ifhmode\unskip\space\fi MR }
\providecommand{\MRhref}[2]{%
  \href{http://www.ams.org/mathscinet-getitem?mr=#1}{#2}
}
\providecommand{\href}[2]{#2}

\end{document}